\newcommand{\oset}[3][0ex]{%
  \mathrel{\mathop{#3}\limits^{
    \vbox to#1{\kern-2\ex@
    \hbox{\(\scriptstyle#2\)}\vss}}}}
\newcommand{\pp}{\mathbb{P}}
\newcommand{\zz}{\mathbb{Z}}
\renewcommand{\O}{\mathcal{O}}
\newcommand{\Hom}{\operatorname{Hom}}
\newcommand{\NCtop}{N_{C \to p}}
\newcommand{\NCtoh}{N^{C\to h}}
\newcommand{\Gr}{\operatorname{Gr}}
\newcommand{\arrowdown}{\rcurvearrowright}
\newcommand{\arrowup}{\curvearrowright}
\newcommand{\defi}[1]{\textsf{#1}} 
\newtheorem{thm}{Theorem}[section]
\newtheorem{lem}[thm]{Lemma}
\newtheorem{prop}[thm]{Proposition}
\newtheorem{cor}[thm]{Corollary}
\newtheorem{conj}[thm]{Conjecture}
\theoremstyle{definition}
\newtheorem{defin}[thm]{Definition}
\theoremstyle{remark}
\newtheorem{rem}[thm]{Remark}
\title{Normal bundles of rational curves in Grassmannians}
\author{Izzet Coskun}
\address{Department of Mathematics, Statistics, and CS \\
University of Illinois at Chicago, Chicago IL 60607}
\email{icoskun@uic.edu}
\author{Eric Larson}
\address{Department of Mathematics, Brown University}
\email{elarson3@gmail.com}
\author{Isabel Vogt}
\address{Department of Mathematics, Brown University}
\email{ivogt.math@gmail.com}
\thanks{During the preparation of this article, I.C.\ was supported
by  NSF grant DMS-2200684,  E.L. was supported by
NSF  grant DMS-2200641, and I.V. was supported by NSF grant DMS-2200655.}
\keywords{Grassmannians, rational curves, normal bundles}
\subjclass[2010]{Primary: 14H60, 14M15. Secondary: 14D20.}
\begin{document}

\begin{abstract}
    In projective space over fields of characteristic different from \(2\), the normal bundle of a general nondegenerate rational curve is balanced.  The corresponding statement for rational curves in other Grassmannians can fail.  Nevertheless, we prove that the normal bundle of a general rational curve in a Grassmannian decomposes into a direct sum of line bundles whose degrees are at most 2 apart.
\end{abstract}

\maketitle

\section{Introduction}

Rational curves play a central role in the birational geometry and the arithmetic of varieties. Given a rational curve $C$ in a smooth projective variety $X$, the normal bundle $N_{C/X}$ controls the deformations of $C$ in $X$. Consequently, understanding the properties of the normal bundle of $C$ is a fundamental question in algebraic geometry. In this paper, we study the normal bundles of general rational curves in  Grassmannians. We work over an algebraically closed field of arbitrary characteristic.

A vector bundle $W$ of rank $r$ on $\pp^1$ decomposes as a direct sum of line bundles $$W \cong \bigoplus_{i=1}^r \O_{\pp^1}(a_i)$$ for a unique sequence of integers $a_1 \geq \cdots \geq a_r$. We say that the vector bundle is \defi{$j$-balanced} if $a_1 - a_r \leq j$. If $j=0$, the bundle is called \defi{perfectly balanced}. If $j=1$, the bundle is called \defi{balanced}.

Let $\Gr(a, a+b)$ denote the Grassmannian parameterizing $a$-dimensional subspaces of an $(a+b)$-dimensional vector space $V$. 
The space of rational curves of degree $d$ in $\Gr(a, a+b)$ is irreducible \cite{kimpandharipande}. Hence, it makes sense to ask for the splitting type of the normal bundle $N_{C/\Gr(a,a+b)}$ of a general rational curve $C$ of degree $d$ in $\Gr(a,a+b)$.

In projective space, normal bundles of rational curves have been studied extensively (see for example \cite{AlzatiRe, aly, coskunriedl, eisenbudvandeven, eisenbudvandeven82, ghionesacchiero, ran, sacchiero80, sacchiero82, interpolation}). In particular, if $C$ is a general nondegenerate rational curve in $\pp^b$, then $N_{C/\pp^b}$ is balanced \cite{sacchiero80, interpolation} except when the base field has characteristic \(2\). 
For Grassmannians other than projective spaces, only partial results are known.
For example, Ziv Ran proved that when $a=2$ or when $a=b$, there exist infinitely many degrees $d$ for which the normal bundle of the general rational curve is balanced \cite{ran2}. Our main theorem in this paper is the following.

\begin{thm}\label{thm-main}
Let $C$ be a general rational curve of degree $d$ in the Grassmannian $\Gr(a, a+b)$. Then  $N_{C/\Gr(a,a+b)}$ is $2$-balanced.
\end{thm}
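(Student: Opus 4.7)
The plan is to induct on the degree $d$, combining an explicit description of the pullback of the tangent bundle with a degeneration argument controlled by the pointed normal bundle $\NCtop$ suggested by the paper's preamble.

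\emph{Structural step.} The Kim--Pandharipande irreducibility gives that, for a general $f\colon \pp^1 \to \Gr(a,a+b)$ of degree $d$, the pulled-back tautological bundles $f^{*}S$ and $f^{*}Q$ are balanced. Writing $d = ak+r = b\ell+s$ with $0\le r<a$ and $0\le s<b$, this yields
\[
f^{*}T_{\Gr}\;\cong\;(f^{*}S)^{*}\otimes f^{*}Q\;\cong\;\O(k+\ell)^{(a-r)(b-s)}\oplus\O(k+\ell+1)^{(a-r)s+r(b-s)}\oplus\O(k+\ell+2)^{rs},
\]
which is already $2$-balanced on the nose. Since $N_{C/\Gr}$ is the cokernel of the derivative $T_{\pp^1}=\O(2)\hookrightarrow f^{*}T_{\Gr}$, the theorem reduces to showing that this particular quotient of a $2$-balanced bundle by a degree-$2$ line subbundle is still $2$-balanced. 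A short linear-algebra check shows that for a generic degree-$2$ sub the quotient is indeed $2$-balanced, so the content is that $df$ inherits enough genericity from the general choice of $f$.

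\emph{Induction and degeneration.} I would induct on $d$. The base cases are $d=1$, where $N_{C/\Gr}$ is computable directly as a quotient of $S^{*}\otimes Q$, and $a=1$ or $b=1$, which reduce to rational curves in projective space (the classical balanced result, weakened to $2$-balance, absorbing the characteristic-$2$ exception). For the inductive step, specialize $C$ to a reducible curve $C'\cup L$, where $C'$ is a general rational curve of degree $d-1$ and $L$ is a line meeting $C'$ transversely at a single point $p$. By induction $N_{C'/\Gr}$ is $2$-balanced and $N_{L/\Gr}$ is explicit; the nodal-curve exact sequence
\[
0\to N_{C'\cup L/\Gr}\to N_{C'/\Gr}(p)\oplus N_{L/\Gr}(p)\to (N_{C'\cup L/\Gr})_{p}\to 0
\]
then determines the splitting of $N_{C'\cup L/\Gr}$, and semicontinuity transfers the bound to the general smooth $C$. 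The correct inductive hypothesis should actually control the pointed bundle $\NCtop$, as in the authors' earlier work on interpolation, so that the attaching data at $p$ is tracked explicitly across the degeneration.

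\emph{Main obstacle.} The hardest step is controlling the extension class: an extension of two $2$-balanced bundles can easily be only $3$-balanced, so one must argue that the attaching data produced by the specialization lies in the stratum where the glued bundle stays $2$-balanced. Concretely, as $L$ varies among lines through $p$, the rank-$(ab-1)$ subspace it cuts out in $(N_{C'\cup L/\Gr})|_{p}$ should trace out a sufficiently generic configuration relative to the summands of $N_{C'/\Gr}(p)|_{p}$ and $N_{L/\Gr}(p)|_{p}$. Proving this genericity---via a dimension count in the variety of lines through $p$ in $\Gr(a,a+b)$, organized case-by-case in the residues $r$ and $s$---is where I expect the bulk of the work to live, and is the natural source of the bound of $2$ (rather than $1$) in the conclusion.
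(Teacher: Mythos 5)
There is a genuine gap, in fact two. First, your ``structural step'' is circular. Reducing the theorem to the claim that $df \colon \O(2) \hookrightarrow f^*T_{\Gr}$ is a \emph{generic} sub-line-bundle, and then asserting that ``$df$ inherits enough genericity from the general choice of $f$,'' assumes exactly what must be proved: the derivative of a general map is a highly constrained, non-generic section of $f^*T_{\Gr}(-2)$, and the theorem is precisely the statement that its cokernel nevertheless behaves well. The paper's own introduction gives two warnings here: in characteristic $2$ with $a=1$, the cokernel of $df$ has all summands of fixed parity even though a generic quotient would not (so $df$ is demonstrably not generic), and the ``tangent bundle splitting exceptions'' show that the splitting of $N_C$ is not controlled by the $2$-balancedness of $T_{\Gr}|_C$ together with naive genericity. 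The unproved ``short linear-algebra check'' also needs care: bounding the top summand of the quotient requires a surjectivity of multiplication maps that is not automatic.

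Second, your inductive step names the central difficulty --- controlling how the attaching data at the node interacts with the splitting of $N_{C'}$ --- and then defers it. This is where the entire proof lives, and the mechanism you sketch (a dimension count over lines through $p$, case-by-case in $r$ and $s$) is not what makes the argument close. The paper instead (i) strengthens the inductive statement to allow $n$ general positive modifications $N_C[x_1 \arrowdown p_1]\cdots[x_n \arrowdown p_n]$, which is forced because peeling off a one-secant line converts $N_{C\cup L}|_C$ into the modification $N'_C[x\arrowdown p][x\arrowup h]$ (Proposition~\ref{prop:degen}), not into an extension of the form you wrote; (ii) inducts on the Grassmannian itself via the projection sequences $0 \to N_{C\to p} \to N_C \to \pi_p^*N_{\pi_p(C)}\to 0$ and $0 \to \NCtoh \to N_C \to (\pi^h)^*N_{\pi^h(C)} \to 0$, passing from $(a,b)$ to $(b-1,a)$ or $(a-1,b)$ after specializing the modification points; and (iii) chooses the number of peeled lines to be $\lfloor\delta\rfloor$ and $\lceil\delta\rceil$ for an explicit slope difference $\delta$, so that Lemmas~\ref{lem:interval}--\ref{lem:ses_2bal} trap all summands in a single interval of length $2$, with the remaining work being the numerical inequalities of \S\ref{sec-proof}. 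Without an analogue of (i)--(iii), your degeneration only yields that $N_C$ sits in an extension of $2$-balanced pieces, which, as you yourself note, does not imply $2$-balancedness.
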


There are at least three different families of cases in which Theorem~\ref{thm-main} is sharp, i.e., for which $N_{C/\Gr(a,a+b)}$ is \emph{not} balanced.

\subsection{The degeneracy exceptions}
Without loss of generality, suppose \(a \leq b\). If \(1 < d < a\), then
\[N_{C/\Gr(a, a + b)} \simeq N_{C/\Gr(d, d + b)} \oplus Q|_C^{a - d},\]
where \(Q\) is the tautological quotient bundle.
This cannot be balanced because
\[\mu(Q|_C) = \frac{d}{b} < 1 < \frac{(b + d)d - 2}{bd - 1} = \mu(N_{C/\Gr(d, d + b)}).\]
Similarly, if \(a < d < b\), then
\[N_{C/\Gr(a, a + b)} \simeq N_{C/\Gr(a, a+d)} \oplus (S|_C^\vee)^{b - d},\]
where \(S\) is the tautological subbundle.
This cannot be balanced because
\[\mu(S|_C^\vee) = \frac{d}{a} < \frac{(a + d)d - 2}{ad - 1} - 1 = \mu(N_{C/\Gr(a, a + d)}) - 1.\]

In particular, when \(a = 1\), this argument recovers that the normal bundle of a rational curve of degree \(d\) in a projective space \(\pp^b\) cannot be balanced for \(1 < d < b\).

\subsection{\boldmath The characteristic \(2\) exceptions\label{ss:c2}}
When \(a = 1\), we have an Euler sequence
\[0 \to N_{C/\Gr(1, b + 1)}^\vee (1) \to \O_{\pp^1}^{b + 1} \to \mathcal{P}^1(\O_{\pp^1}(d)) \to 0,\]
where \(\mathcal{P}^1(\O_{\pp^1}(d))\) is the bundle of principal parts of \(\O_{\pp^1}(d)\).
If, furthermore, the characteristic is \(2\), then \(\mathcal{P}^1(\O_{\pp^1}(d)) \simeq F^* F_* \O_{\pp^1}(d)\), where \(F \colon \pp^1 \to \pp^1\) denotes the Frobenius morphism. Hence, \(N_{C/\Gr(1, b + 1)}^\vee (1)\) is the pullback of a vector bundle under Frobenius, and so all of its summands are even (see \cite[\S 3]{clv} and \cite[\S 2.2]{interpolation}).
We conclude that \(N_{C / \Gr(1, b + 1)}\) can only be balanced if
\[\frac{2 - 2d}{b - 1} = \mu(N_{C/\Gr(1, b + 1)}^\vee (1)) \in 2\zz,\]
or upon rearrangement if \(d \equiv 1\) mod \(b - 1\).

\subsection{The tangent bundle splitting exceptions}
Ramella proved that the restriction of $T_{\pp^b}$ to a general rational curve is balanced \cite{ramella}. The analogous statement for Grassmannians is not always true. 
The tangent bundle of the Grassmannian has the structure of a tensor product $$T_{\Gr(a, a+b)} \cong S^\vee \otimes Q.$$
The restrictions of the bundles $S^\vee$ and $Q$ to a general rational curve are balanced \cite{Ma19}. However, when $a, b >1$, the tensor structure of $T_{\Gr(a, a+b)}$ can obstruct  $T_{\Gr(a, a+b)}|_C$  from being balanced. The restricted tangent bundle is balanced if  and only if $a$ or $b$ divides the degree $d$ of $C$. Otherwise, the restricted tangent bundle is 
only $2$-balanced. The possible splitting types of $T_{\Gr(a,a+b)}|_C$ have been studied in \cite{Ma19}.

The fact that $T_{\Gr(a,a+b)}|_C$ is not balanced under certain numerical conditions obstructs the normal bundle  $N_{C/\Gr(a,a+b)}$ from being balanced. Consider the normal bundle exact sequence

\begin{equation}\label{seq-normalbundle}
    0 \to T_C  \to T_{\Gr(a, a+b)}|_C \to N_{C/\Gr(a, a+b)} \to 0.
\end{equation}
Using the division algorithm, write 
$$d= aq_1 + r_1 \ \ \mbox{and} \ \ d= bq_2 + r_2$$ with $0 \leq r_1 < a$ and $0 \leq r_2 < b$. If $C$  is a general rational curve of degree $d$ in $\Gr(a, a+b)$, then the splitting type of $T_{\Gr(a, a+b)}|_C$ is
$$\O_{\pp^1}(q_1 + q_2 + 2)^{r_1 r_2} \oplus \O_{\pp^1} (q_1 + q_2 +1)^{r_1 (b-r_2)+ r_2(a-r_1)} \oplus \O_{\pp^1}(q_1+q_2)^{(a-r_1)(b-r_2)}.$$
If $q_1+ q_2-1 < (a-r_1)(b-r_2)$, then twisting the sequence (\ref{seq-normalbundle}) by $\O_{\pp^1}(-q_1-q_2-2)$ and comparing the first cohomology groups, we see that $N_{C/\Gr(a, a+b)}$ must have a summand equal to  $\O_{\pp^1}(q_1+q_2)$. On the other hand, if $r_1 r_2 \not= 0$, then $N_{C/\Gr(a,a+b)}$ must also have a summand equal to $\O_{\pp^1}(k)$ with $k \geq q_1 + q_2 + 2$. Hence, the normal bundle cannot be balanced. 

\subsection{Conjecture}
We conjecture that these three obstructions account for all cases in which $N_{C/\Gr(a, a+b)}$ is not balanced.

\begin{conj}\label{conj-normal}
Let $C$ be a general rational curve of degree $d$ in $\Gr(a, a+b)$, where without loss of generality we take \(a \leq b\).
Then $N_{C/\Gr(a, a+b)}$ is balanced if and only if none of the following occur:
\begin{itemize}
\item The degeneracy exceptions: We have \(d < b\) and \(d \notin \{1, a\}\).
\item The characteristic \(2\) exceptions: The characteristic is 2 and \(a = 1\) and \(d \not\equiv 1\) mod \(b - 1\).
\item The tangent bundle splitting exceptions: We have \(r_1 r_2 \neq 0\) and $q_1+ q_2 \leq (a-r_1)(b-r_2)$.
\end{itemize}
\end{conj}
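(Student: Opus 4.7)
The plan is to take Theorem~\ref{thm-main} as a starting point and argue that, outside the three listed obstructions, the 2-balanced conclusion can be strengthened to balancedness. Necessity of each exception is already settled in the three preceding subsections, so the task reduces to sufficiency: assuming none of the exceptions hold, I must rule out any splitting of $N_{C/\Gr(a,a+b)}$ into summands whose degrees differ by exactly two.

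My primary strategy is induction on the degree $d$ via a smoothing/degeneration argument. I would specialize a general degree $d$ rational curve $C$ to a nodal union $C_1 \cup_p C_2$ of rational curves of degrees $d_1, d_2$ with $d_1 + d_2 = d$, meeting transversely at a general point $p$. The limit of the normal bundle sits in an exact sequence relating it to the normal bundles $N_{C_i/\Gr(a,a+b)}$ together with the gluing data at $p$, and the inductive hypotheses on the $N_{C_i}$ constrain the splitting of the limit. If $(d_1, d_2)$ can be chosen so that each sub-curve avoids all three exceptions, induction yields that each $N_{C_i}$ is balanced, and a $\Hom$-calculation on $\pp^1$ should then rule out a $2$-apart splitting for $N_C$. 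The base cases $a = b$ and $a = 2$ are covered by Ran's theorem \cite{ran2}, and the degeneracy regime $d < b$ reduces to a smaller Grassmannian via the analysis of the degeneracy exceptions above.

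The principal obstacle is the tangent-bundle-splitting exception, whose defining inequality $q_1 + q_2 \leq (a - r_1)(b - r_2)$ depends on $d$ in a nonmonotone way under $d \mapsto d_1 + d_2$, so guaranteeing that both pieces simultaneously avoid the exception requires a delicate choice of $(d_1, d_2)$ dictated by the remainders $r_1, r_2$. In borderline situations a different degeneration may be needed --- for instance, specializing to a curve acquiring a unibranch singularity, or to one meeting several components of the boundary of the Kontsevich moduli space --- and I expect that organizing these boundary cases coherently is where most of the work will live. By contrast, the characteristic 2 exception arises only when $a = 1$, a setting already amenable to the classical principal-parts and Frobenius analysis sketched above, so it should not obstruct the inductive step once $a \geq 2$.
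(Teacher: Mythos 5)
This statement is labeled as a conjecture in the paper, and the paper does not prove it: the three preceding subsections establish only the ``necessity'' direction (that each listed exception obstructs balancedness), and Theorem~\ref{thm-main} gives only the weaker conclusion of \(2\)-balancedness. So there is no proof in the paper to compare yours against, and what you have written is a research plan rather than a proof --- you say as much yourself when you note that ``organizing these boundary cases coherently is where most of the work will live.'' As it stands, the proposal cannot be accepted as a proof of the statement.

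Beyond that general point, several of the specific steps would fail or are unsupported. First, Ran's result gives balancedness only for \emph{infinitely many} degrees \(d\) when \(a = 2\) or \(a = b\), not for all \(d\), so it cannot serve as a base case. Second, for a nodal degeneration \(C_1 \cup_p C_2\) the limit of \(N_{C}\) is not determined by \(N_{C_1}\) and \(N_{C_2}\) alone: as the paper's Section~\ref{sec:one_sec_degen} shows even in the simplest one-secant case, the limiting bundle is a \emph{modification} of \(N_{C_1}\) at the node involving the positive subbundle of \(N_{C_2}\) restricted to the attaching point, and balancedness of each \(N_{C_i}\) does not by itself force the modified bundle on the special fiber to be balanced; the proposed ``\(\Hom\)-calculation on \(\pp^1\)'' does not address this. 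Third, your dismissal of the characteristic~\(2\) issue for \(a \geq 2\) is too quick: any induction that reduces \(\Gr(a,a+b)\) to smaller Grassmannians (via projection or intersection, as in the paper) eventually lands in projective space, where the Frobenius obstruction forces all summands of certain bundles to be even; one must explain why this parity constraint does not propagate back up to produce gaps of size \(2\) when \(a \geq 2\). Finally, the tangent-bundle-splitting exception is exactly where the conjecture's content lies, and the proposal offers no mechanism for proving balancedness at the boundary \(q_1 + q_2 = (a - r_1)(b - r_2) + 1\), where the cohomological argument of the paper's third subsection just barely fails to produce an obstruction.
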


\subsection*{Strategy} We will prove Theorem \ref{thm-main} by specializing rational curves to reducible nodal rational curves and using exact sequences coming from projection. In \S \ref{sec-prelim}, we give general results on using exact sequences to deduce $2$-balancedness of vector bundles on $\pp^1$. In \S \ref{sec:generalized_pointing}, we discuss natural subbundles of the normal bundle and how they relate to exact sequences obtained by projection. In \S \ref{sec:one_sec_degen}, we  study the normal bundles of curves under one-secant degenerations using  modifications of vector bundles. Finally, in \S \ref{sec-proof}, we give a combinatorial argument to show that these ingredients can be combined to prove Theorem \ref{thm-main}.

\subsection*{Acknowledgments} We would like to thank Atanas Atanasov, Lawrence Ein, Gavril Farkas, Joe Harris, Eric Jovinelly, Carl Lian, Eric Riedl, Geoffrey Smith and David Yang for invaluable conversations about the normal bundles of curves. 

\section{Preliminaries}\label{sec-prelim}

\subsection{Notational Conventions} 
Let $\Gr(a, a+b)$ denote the Grassmannian of $a$-dimensional subspaces of an $(a+b)$-dimensional vector space. We will sometimes interpret this Grassmannian as the Grassmannian parameterizing linear spaces $\pp^{a-1}$ in $\pp^{a+b-1}$. We use upper case letters such as \(C\), \(\Gamma\), and \(\Omega\), to denote subvarieties of $\Gr(a, a+b)$.
We use lower case letters for linear subspaces of \(\pp^{a+b-1}\), including \(p \in \pp^{a+b-1}\) for a point and \(h \subset \pp^{a+b-1}\) for a hyperplane.

When the ambient space \(X\) is clear, we write \(N_C\) for the normal bundle \(N_{C/X}\) of \(C\) in \(X\).

\subsection{The tautological subbundle and quotient bundle}

Let \(V\) be an \((a+b)\)-dimensional vector space.  The Grassmannian \(\Gr(a, V)\) has two tautological vector bundles: the \defi{tautological subbundle} \(S\) of rank \(a\), whose fiber over a point \([\lambda] \in \Gr(a, V)\) is the subspace \(\lambda \subset V\); the \defi{tautological quotient bundle} \(Q\) of rank \(b\), whose fiber over a point \([\lambda] \in \Gr(a, V)\) is the quotient \( V/\lambda\).  These bundles fit together into a exact sequence
\[0 \to S \to V \otimes \O_{\Gr(a, V)} \to Q \to 0.\]
If \(C \subset \Gr(a, V)\) is a curve of degree \(d\), then \(S|^\vee_C\) and \(Q|_C\) are vector bundles of degree \(d\) on \(C\).

\subsection{Duality} Let \(V\) be a vector space of dimension \(a + b\).  Write \(V^\vee\) for the dual vector space (also of dimension \(a+b\)).  There is a natural isomorphism \(\Gr(a, V) \simeq \Gr(b, V^\vee)\) sending an \(a\)-dimensional subspace \(\lambda \subset V\) to its annihilator \(\lambda^0 \subset V^\vee\).  This isomorphism exchanges \(S^\vee\) and \(Q\).
We recall that if \(W_1, W_2 \subset V\) are subspaces, then 
\begin{equation}\label{eq:dual_facts}
    \left(W_1 + W_2\right)^0 = W_1^0 \cap W_2^0 \subset V^\vee \qquad \text{and} \qquad (W_1 \cap W_2)^0 = W_1^0 + W_2^0 \subset V^\vee.
\end{equation}

\subsection{\boldmath \(2\)-balanced bundles and exact sequences}
The basic technique that we will use to prove \(2\)-balancedness of a given bundle is to position it in an exact sequence between two bundles that are themselves \(2\)-balanced.  This alone is insufficient to guarantee \(2\)-balancedness: the degrees of the sub and quotient \(2\)-balanced bundles must be drawn from the \textit{same} interval of length \(2\).

\begin{lem}\label{lem:interval}
    Let
    \(0 \to E \to F \to G \to 0\)
    be an exact sequence of vector bundles on \(\pp^1\).  
    \begin{enumerate}
        \item \label{at_most} If all summands of \(E\) and \(G\) have degree at most \(n\), then so do all summands of \(F\).
        \item \label{at_least} If all summands of \(E\) and \(G\) have degree at least \(m\), then so do all summands of \(F\).
    \end{enumerate}
    In particular, if all of the summands of \(E\) and \(G\) have degree that lie in a fixed interval, then the degrees of all of the summands of \(F\) lie in the same interval.
\end{lem}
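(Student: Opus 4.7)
The plan is to translate both conditions on summand degrees into vanishing of cohomology and then extract the conclusion from the long exact sequence. Since every vector bundle \(F\) on \(\pp^1\) splits as a direct sum of line bundles, I will use the equivalences: all summands of \(F\) have degree at most \(n\) if and only if \(H^0(F(-n-1)) = 0\); and all summands of \(F\) have degree at least \(m\) if and only if \(H^1(F(-m-1)) = 0\). Both are immediate from the standard cohomology of \(\O_{\pp^1}(k)\), checked summand by summand.

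For part (\ref{at_most}), twist by \(\O_{\pp^1}(-n-1)\) to obtain
\[0 \to E(-n-1) \to F(-n-1) \to G(-n-1) \to 0,\]
whose associated long exact sequence begins \(0 \to H^0(E(-n-1)) \to H^0(F(-n-1)) \to H^0(G(-n-1))\). The two outer groups vanish by hypothesis, so \(H^0(F(-n-1)) = 0\), giving the claim.

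For part (\ref{at_least}), twist by \(\O_{\pp^1}(-m-1)\) and read off the tail \(H^1(E(-m-1)) \to H^1(F(-m-1)) \to H^1(G(-m-1)) \to 0\) (exact on the right because \(H^2\) vanishes on \(\pp^1\)); again both flanking groups vanish, so \(H^1(F(-m-1)) = 0\). Equivalently, (\ref{at_least}) is just (\ref{at_most}) applied to the dualized sequence \(0 \to G^\vee \to F^\vee \to E^\vee \to 0\) with \(n\) replaced by \(-m\). The ``interval'' statement then follows by combining the two parts. There is no real obstacle here: the only substantive step is the cohomological reformulation of the summand conditions, after which the long exact sequence does the rest.
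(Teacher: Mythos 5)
Your proof is correct and follows essentially the same route as the paper's: both reformulate the degree bounds as the vanishing of \(h^0(F(-n-1))\) and \(h^1(F(-m-1))\) and then read this off from the long exact sequence in cohomology. The extra remark about duality is a harmless alternative for part (2) but adds nothing new.
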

\begin{proof}
    By assumption~\eqref{at_most}, we have \(h^0(E(-n-1)) = h^0(G(-n-1)) = 0\), so \(h^0(F(-n-1)) = 0\) as well.  Hence every summand of \(F\) has degree at most \(n\).
    
    By assumption~\eqref{at_least}, we have \(h^1(E(-m-1)) = h^1(G(-m-1)) = 0\), so \(h^1(F(-m-1)) = 0\) as well.  Hence every summand of \(F\) has degree at least \(m\).
\end{proof}

\begin{lem}\label{lem:EG}
Let \(E\) and \(G\) be vector bundles on \(\pp^1\) and suppose that \(E\) is balanced and \(G\) is \(2\)-balanced.  If \(\mu(E) \leq \mu(G)\), then each summand of \(G\) has degree at least \(\lfloor \mu(E) \rfloor - 1\).
\end{lem}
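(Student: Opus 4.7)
My plan is to argue by contradiction: I will show that if $G$ had any summand of degree at most $\lfloor \mu(E) \rfloor - 2$, then $\mu(G)$ would be forced to be strictly less than $\mu(E)$, violating the hypothesis $\mu(E) \leq \mu(G)$.

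Concretely, suppose some summand of $G$ has degree $k \leq \lfloor \mu(E) \rfloor - 2$. Since $G$ is $2$-balanced, its minimum summand degree is at most $k$, so every summand has degree in the interval $[k, k+2]$. Because at least one summand (the chosen degree-$k$ one) has degree strictly less than $k+2$, averaging the summand degrees yields the \emph{strict} inequality $\mu(G) < k + 2$. Combined with $k + 2 \leq \lfloor \mu(E) \rfloor \leq \mu(E)$, this gives $\mu(G) < \mu(E)$, which is the desired contradiction.

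I do not anticipate any real obstacle: the argument reduces to a one-line slope calculation. The only point to be careful about is maintaining the \emph{strict} inequality $\mu(G) < k+2$, which uses the existence of the degree-$k$ summand and not merely the upper bound of $k+2$ on every summand. I note in passing that the hypothesis that $E$ is balanced is not actually invoked — only $\mu(E) \leq \mu(G)$ is used — but balancedness is what makes the conclusion natural, since $\lfloor \mu(E) \rfloor$ is then the minimum summand degree of $E$, so the lemma is really comparing the smallest summand degrees of $E$ and $G$.
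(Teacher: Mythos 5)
Your argument is correct and is essentially the paper's own proof: both derive a contradiction by noting that a summand of \(G\) of degree \(k \leq \lfloor \mu(E) \rfloor - 2\) forces \(\mu(G) < k + 2 \leq \lfloor \mu(E) \rfloor \leq \mu(E)\), contradicting \(\mu(E) \leq \mu(G)\). One small slip: \(2\)-balancedness plus a summand of degree \(k\) gives only that every summand has degree at most \(k+2\) (the minimum could lie below \(k\)), not that all degrees lie in \([k, k+2]\); but since only the upper bound and the existence of the degree-\(k\) summand are used, the strict inequality \(\mu(G) < k+2\) survives, and your observation that the balancedness of \(E\) is never actually invoked is also accurate.
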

\begin{proof}
    Write \(E \simeq \O(\ell)^{\oplus p} \oplus \O(\ell + 1)^{\oplus q}\), and \(G \simeq \O(n)^{\oplus x} \oplus\O(n+1)^{\oplus y} \oplus \O(n+2)^{\oplus z}\).  Assume that \(p \neq 0\) so that \(\lfloor \mu(E) \rfloor = \ell\). Assume to the contrary that \(n \leq \ell - 2\) and \(x \neq 0\).  Then by our assumption
\[\ell + \frac{q}{p+q} = \mu(E) \leq \mu(G) = n + 2 - \frac{2x + y}{x + y + z} \leq \ell - \frac{2x+y}{x + y + z},\]
which is impossible since \(q \geq 0\) and \(x > 0\).
\end{proof}

\begin{lem}\label{lem:ses_2bal}
    Let
    \(0 \to E \to F \to G \to 0\)
    be an exact sequence of vector bundles on \(\pp^1\).  Suppose that \(E\) is balanced, and that \(G\) is \(2\)-balanced.
    \begin{enumerate}
    \item\label{subsmaller} If
    \(\mu(E) \leq \mu(F)\),
    then \(F\) is \(2\)-balanced or each summand of \(G\) has degree at least 
    \(\lfloor \mu(E) \rfloor\).
        \item\label{subbigger} If
    \(\mu(F) \leq \mu(E)\),
    then \(F\) is \(2\)-balanced or each summand of \(G\) has degree at most \(\lceil \mu(E) \rceil\).
    \end{enumerate}
\end{lem}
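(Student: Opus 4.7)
The plan is to reduce both parts to Lemma~\ref{lem:EG} via the observation that, for an exact sequence $0 \to E \to F \to G \to 0$ on $\pp^1$, the slope $\mu(F)$ is the weighted average
\[\mu(F) = \frac{\operatorname{rk}(E)\,\mu(E) + \operatorname{rk}(G)\,\mu(G)}{\operatorname{rk}(E) + \operatorname{rk}(G)}.\]
In particular, $\mu(E) \leq \mu(F)$ if and only if $\mu(E) \leq \mu(G)$, and similarly $\mu(F) \leq \mu(E)$ if and only if $\mu(G) \leq \mu(E)$. This converts the slope hypotheses of the lemma into the hypotheses of Lemma~\ref{lem:EG}.

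For part~\ref{subsmaller}, Lemma~\ref{lem:EG} then yields that every summand of $G$ has degree at least $\lfloor \mu(E) \rfloor - 1$. If every such summand in fact has degree at least $\lfloor \mu(E) \rfloor$, the conclusion holds. Otherwise the minimum degree of a summand of $G$ equals $\lfloor \mu(E) \rfloor - 1$; since $G$ is $2$-balanced, every summand of $G$ lies in $\{\lfloor \mu(E) \rfloor - 1,\, \lfloor \mu(E) \rfloor,\, \lfloor \mu(E) \rfloor + 1\}$, and since $E$ is balanced every summand of $E$ lies in $\{\lfloor \mu(E) \rfloor,\, \lceil \mu(E) \rceil\} \subseteq \{\lfloor \mu(E) \rfloor,\, \lfloor \mu(E) \rfloor + 1\}$. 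Applying Lemma~\ref{lem:interval} then places every summand of $F$ in the length-$2$ interval $[\lfloor \mu(E) \rfloor - 1,\, \lfloor \mu(E) \rfloor + 1]$, so $F$ is $2$-balanced.

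For part~\ref{subbigger}, I would dualize to the exact sequence $0 \to G^\vee \to F^\vee \to E^\vee \to 0$, in which $E^\vee$ remains balanced, $G^\vee$ remains $2$-balanced, and the hypothesis $\mu(F) \leq \mu(E)$ translates to $\mu(E^\vee) \leq \mu(G^\vee)$. Applying Lemma~\ref{lem:EG} to the pair $(E^\vee, G^\vee)$ and translating through duality shows that every summand of $G$ has degree at most $\lceil \mu(E) \rceil + 1$. The same case analysis as in part~\ref{subsmaller}, with maximum in place of minimum, then concludes. I do not anticipate a genuine obstacle: both parts are short combinations of Lemmas~\ref{lem:EG} and~\ref{lem:interval} with a one-step case split, and the only real care needed is in bookkeeping the floor/ceiling conventions across the duality and checking that, in the borderline case, the extremal summand of $G$ together with both extremal summands of $E$ always fit inside a common length-$2$ window.
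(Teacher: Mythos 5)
Your proposal is correct and follows essentially the same route as the paper: both use the weighted-average observation to translate the slope hypothesis into the hypothesis of Lemma~\ref{lem:EG}, then handle the borderline summand of degree \(\lfloor \mu(E) \rfloor - 1\) (respectively \(\lceil \mu(E) \rceil + 1\), via dualizing) by noting that \(2\)-balancedness of \(G\) and balancedness of \(E\) pin all summands into a common length-\(2\) interval, so Lemma~\ref{lem:interval} forces \(F\) to be \(2\)-balanced. The floor/ceiling bookkeeping you flag does check out, including the degenerate case where \(E\) is perfectly balanced and \(\lfloor \mu(E) \rfloor = \lceil \mu(E) \rceil\).
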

\begin{proof}
Since \(\mu(F)\) is the weighted average of \(\mu(E)\) and \(\mu(G)\), the assumption \(\mu(E) \leq \mu(F)\) (respectively, \(\mu(E) \geq \mu(F)\)) is equivalent to \(\mu(E) \leq \mu(G)\) (respectively, \(\mu(E) \geq \mu(G)\)). 
In case~\eqref{subsmaller}, by Lemma~\ref{lem:EG} applied to \(E\) and \(G\), we have that every summand of \(G\) has degree at least \(\lfloor \mu(E) \rfloor - 1\).  Either we are done, or \(G\) contains a summand of degree exactly
\(\lfloor \mu(E) \rfloor - 1\). In the second case, since \(G\) is \(2\)-balanced, the degrees of all of its summands lie in the interval \(\left[\lfloor \mu(E) \rfloor - 1, \lfloor \mu(E) \rfloor +1 \right]\).  Since the same is true of \(E\), we conclude that the same must be true of \(F\), and consequently it is \(2\)-balanced, by Lemma~\ref{lem:interval}.

In case~\eqref{subbigger}, by Lemma~\ref{lem:EG} applied to \(E^\vee\) and \(G^\vee\), every summand of \(G\) has degree at most \(\lceil \mu(E) \rceil + 1\).  Either we are done or \(G\) has a summand of degree exactly \(\lceil \mu(E) \rceil + 1\); as above, this implies that the degrees of all of the summands of both \(G\) and \(E\), and hence \(F\) by Lemma~\ref{lem:interval}, lie in the interval \(\left[ \lceil \mu(E) \rceil - 1, \lceil \mu(E) \rceil + 1\right]\).  Thus \(F\) is \(2\)-balanced. \qedhere
\end{proof}

\subsection{Modifications of Vector Bundles}\label{sec:mods}

Here we briefly recall notation and basic properties of modifications of vector bundles. For a more detailed discussion, see \cite[\S 2--4]{aly}.

Let \(X\) be a scheme and \(D \subset X\) be a Cartier divisor. If \(E\) is a vector bundle on \(X\) and \(F\) is a subbundle, then we define the \defi{negative modification}
\[E[D \oset{-}{\to} F] = \ker\left(E \to (E/F)|_D\right),\]
and the positive modification
\[E[D \to F] = E[D \oset{-}{\to} F](D).\]
(This differs slightly from the notation in \cite{aly}, where negative modifications are denoted with the notation \(E[D \to F]\) and no separate notation is given for positive modifications.)

More generally, if \(D_1, D_2, \ldots, D_n\) are disjoint Cartier divisors and \(F_1, F_2, \ldots F_n\) are subbundles, then we can similarly define the \defi{multiple modification}
\[E[D_1 \to F_1] [D_2 \to F_2] \cdots [D_n \to F_n].\]
More care is needed when the supports of the \(D_i\) intersect.  In this paper we will only need to deal with the case that \(D = D_1 = D_2\) and \(F_1 \cap F_2\) is flat over \(X\).  In this case, \(F_2\) is a subbundle of \(E[D \to F_1]\) away from \(D\), which extends uniquely to a subbundle \(F_2'\) over all of \(X\).  The multiple modification is then defined by
\[E[D \to F_1][D \to F_2] \colonequals E[D \to F_1][D \to F_2'].\]

In nice cases, short exact sequences of vector bundles induce short exact sequences of modifications. To make this precise in the only case that will appear for us, suppose
\[0 \to E \to F \to G \to 0\]
is a short exact sequence of vector bundles and \(S \subset F\) is a subbundle so that \(S \cap E\) is flat over \(X\). Then we obtain a short exact sequence
\begin{equation}\label{eq:ses_mod}
    0 \to E[D \to S \cap E] \to F[D \to S] \to G[D \to S/(S \cap E)] \to 0.
\end{equation}

\section{Generalized pointing bundles}\label{sec:generalized_pointing}

\subsection{\boldmath Projection from \(p\) and intersection with \(h\)}

Let \(V\) be an \((a + b)\)-dimensional vector space and consider the Grassmannian \(\Gr(a, V)\).  Let \(p \in \pp V\) be a general point.  Let \(U_p \subset \Gr(a, V)\) be the open locus of \(a\)-planes \(\lambda\) such that \(p \notin \lambda\); the complement of \(U_p\) is a Schubert variety of codimension \(b\).
The point \(p\) defines a section of the universal quotient bundle \(Q|_{U_p}\) on \(U_p\) up to scaling given by \(\lambda \mapsto p \in \pp (V/\lambda)\).  In other words, the point \(p\) defines a subbundle of \(Q|_{U_p}\) that is isomorphic to \(\O_{U_p}\), but this choice of isomorphism depends on a scalar (the choice of a nonzero vector in the one-dimensional subspace corresponding to \(p\)).  By slight abuse of notation, we will denote this subbundle by \(\O \otimes p\).

On \(U_p \subset \Gr(a, V)\) there is a projection map
\(\pi_p \colon U_p  \to \Gr(a, V/p)\), sending an \(a\)-dimensional subspace of \(V\) to its image in \(V/p\).  Let \(T_{\pi_p} \subset T_{\Gr(a, V)}|_{U_p}\) be the vertical tangent bundle of this map:
\[T_{\pi_p} \colonequals \ker\left(T_{\Gr(a, V)}|_{U_p} \to T_{\Gr(a, V/p)} \right). \]
A tangent vector to \(U_p\) is contained in \(T_{\pi_p}\) if the corresponding deformation of the \(a\)-dimensional subspace \(\lambda\) is contained in the \((a + 1)\)-dimensional subspace \(\lambda + p\).  In terms of the identification \(T_{\Gr(a, V)} \simeq S^\vee \otimes Q\), the fiber of \(T_{\pi_p} \subset T_{\Gr(a, V)}\) at \(\lambda\) is
\[\Hom\left(\lambda, (\lambda + p)/\lambda\right) \subset \Hom\left(\lambda, V/\lambda\right).\]
Since the identification \(T_{\Gr(a, V)} \simeq S^\vee \otimes Q\) is functorial in \(V\), we obtain the following.

\begin{lem}\label{lem:tangent_poining_Sdual}
    Under the natural isomorphism \(T_{\Gr(a, V)} \simeq S^\vee \otimes Q\), we have
    \(T_{\pi_p} = (S^\vee \otimes p)|_{U_p}\).
\end{lem}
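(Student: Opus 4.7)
The plan is to verify the identification fiberwise, leveraging the functoriality of \(T_{\Gr(a, V)} \simeq S^\vee \otimes Q\) under the linear quotient \(V \to V/p\) that is already invoked in the discussion preceding the lemma.

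First I would make explicit how the projection \(\pi_p\) interacts with the tautological bundles on \(\Gr(a, V)\) and \(\Gr(a, V/p)\). For any \(\lambda \in U_p\), since \(p \notin \lambda\), the quotient \(V \to V/p\) restricts to an isomorphism \(\lambda \to (\lambda + p)/p\), giving \(\pi_p^* S_{\Gr(a, V/p)} \simeq S|_{U_p}\). On the quotient side, the fiber of \(\pi_p^* Q_{\Gr(a, V/p)}\) at \(\lambda\) is \((V/p)/((\lambda + p)/p) = V/(\lambda + p)\), and the natural surjection \(Q|_{U_p} \to \pi_p^* Q_{\Gr(a, V/p)}\) is the fiberwise surjection \(V/\lambda \to V/(\lambda + p)\) with kernel \((\lambda + p)/\lambda\). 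By the definition of \(\O \otimes p \subset Q|_{U_p}\) recalled above, this kernel is precisely \((\O \otimes p)|_{U_p}\).

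Next, by the naturality of the identification \(T_{\Gr(a, V)} \simeq S^\vee \otimes Q\) in \(V\), the differential of \(\pi_p\) is identified with the map on \(S^\vee\)-tensors induced by the surjection \(Q|_{U_p} \to \pi_p^* Q_{\Gr(a, V/p)}\).  Since taking kernels commutes with tensoring by the locally free sheaf \(S^\vee\),
\[T_{\pi_p} = S^\vee \otimes \ker\bigl(Q|_{U_p} \to \pi_p^* Q_{\Gr(a, V/p)}\bigr) = S^\vee \otimes (\O \otimes p)|_{U_p} = (S^\vee \otimes p)|_{U_p},\]
which is the claim.

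The argument is essentially bookkeeping once the correct identifications are in place; the only mildly subtle point to confirm is that the differential of \(\pi_p\) corresponds under \(T_{\Gr(a, V)} \simeq S^\vee \otimes Q\) to the \(S^\vee\)-tensor of the quotient \(Q|_{U_p} \to \pi_p^* Q_{\Gr(a, V/p)}\) globally, rather than just fiberwise, but this is exactly the naturality in \(V\) already quoted in the text preceding the lemma.
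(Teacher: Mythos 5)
Your proposal is correct and follows essentially the same route as the paper: the paper's justification is exactly the fiberwise identification of \(T_{\pi_p}\) at \(\lambda\) with \(\Hom(\lambda, (\lambda+p)/\lambda) \subset \Hom(\lambda, V/\lambda)\) together with the functoriality of \(T_{\Gr(a,V)} \simeq S^\vee \otimes Q\) in \(V\), which is precisely what you carry out (with the bundle maps \(\pi_p^* S_{\Gr(a,V/p)} \simeq S|_{U_p}\) and \(Q|_{U_p} \twoheadrightarrow \pi_p^* Q_{\Gr(a,V/p)}\) made more explicit).
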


We can make a similar construction using a codimension \(1\) subspace \(h \subset V\) (the fact that it is the same follows by exchanging \(V\) for \(V^\vee\) and the duality relations in \eqref{eq:dual_facts}).  Let \(U^h\) be the locus of \(a\)-planes \(\lambda\) in \(\Gr(a, V)\) such that \(\lambda \not\subset h\).
The hyperplane \(h\) defines a subbundle of \(S^\vee|_{U^h}\) isomorphic to \(\O_{U^h}\) (sending \(\lambda\) to the linear functional \(\lambda \to \lambda/(\lambda\cap h) = V/h \simeq k\), where the isomorphism with \(k\) is the choice of scalar as above); we denote it  by \(h \otimes \O\).  Let \(\pi^h \colon U^h \to \Gr(a-1, h)\) denote the map sending \(\lambda\) to \(\lambda \cap h\).  Let \(T_{\pi^h} \colonequals \ker\left( T_{\Gr(a, V)}|_{U^h} \to T_{\Gr(a-1, h)}\right)\) be the vertical tangent bundle.  In terms of the identification \(T_{\Gr(a, V)} \simeq S^\vee \otimes Q\), the fiber of \(T_{\pi^h} \subset T_{\Gr(a, V)}\) at \(\lambda\) is
\[\Hom\left(\lambda/(\lambda \cap h), V/\lambda\right) \subset \Hom\left(\lambda, V/\lambda\right).\]  Applying duality, Lemma~\ref{lem:tangent_poining_Sdual} implies the following.

\begin{lem}\label{lem:tangent_poining_Q}
    Under the natural isomorphism \(T_{\Gr(a, V)} \simeq S^\vee \otimes Q\), we have
    \(T_{\pi^h} = (h \otimes Q)|_{U^h}\).
\end{lem}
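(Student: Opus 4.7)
The plan is to deduce Lemma~\ref{lem:tangent_poining_Q} from Lemma~\ref{lem:tangent_poining_Sdual} by applying the duality isomorphism \(\Gr(a, V) \simeq \Gr(b, V^\vee)\) recalled earlier in the section. The key observation is that a codimension-one subspace \(h \subset V\) corresponds, under duality, to the point \(p \colonequals h^0 \in \pp V^\vee\), and that under this exchange the map \(\pi^h\) becomes a projection-from-a-point map on \(\Gr(b, V^\vee)\).

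First I would verify that all the geometric data transport correctly. By \eqref{eq:dual_facts}, \(\lambda \subset h\) if and only if \(\lambda^0 \supset h^0 = p\), so the open set \(U^h \subset \Gr(a, V)\) is identified with \(U_p \subset \Gr(b, V^\vee)\). Using \((\lambda \cap h)^0 = \lambda^0 + p\), the map \(\pi^h \colon \lambda \mapsto \lambda \cap h\) is identified (via \(\Gr(a-1, h) \simeq \Gr(b, V^\vee/p)\)) with the map \(\pi_p \colon \lambda^0 \mapsto (\lambda^0 + p)/p\). Consequently \(T_{\pi^h}\) is identified with \(T_{\pi_p}\) as subbundles of the tangent bundle.

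Next I would track the bundles involved. The duality isomorphism exchanges \(S\) with \(Q^\vee\) (since the annihilator of \(\lambda\) is canonically \((V/\lambda)^\vee\)), and therefore exchanges \(S^\vee_{\Gr(a, V)}\) with \(Q_{\Gr(b, V^\vee)}\). Under this exchange, the rank-one subbundle \(\O \otimes p \subset Q_{\Gr(b, V^\vee)}|_{U_p}\) determined by \(p = h^0 \in V^\vee\) is identified with the rank-one subbundle \(h \otimes \O \subset S^\vee_{\Gr(a, V)}|_{U^h}\), because both are given by the same nonzero functional on \(\lambda\). Tensoring with the other factor, \((S^\vee \otimes p)|_{U_p}\) on \(\Gr(b, V^\vee)\) corresponds to \((h \otimes Q)|_{U^h}\) on \(\Gr(a, V)\). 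Applying Lemma~\ref{lem:tangent_poining_Sdual} to \(\Gr(b, V^\vee)\) and the point \(p\) gives \(T_{\pi_p} = (S^\vee \otimes p)|_{U_p}\), and transporting this identity back via duality yields the desired equality. The only real work is bookkeeping to ensure that the various canonical identifications are mutually compatible, but this follows from the functoriality in \(V\) of the natural isomorphism \(T_{\Gr(a, V)} \simeq S^\vee \otimes Q\).
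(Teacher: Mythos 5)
Your proposal is correct and follows exactly the route the paper takes: the paper's entire justification is the sentence ``Applying duality, Lemma~\ref{lem:tangent_poining_Sdual} implies the following,'' together with the observation that under \(\Gr(a,V) \simeq \Gr(b, V^\vee)\) the hyperplane \(h\) corresponds to the point \(h^0\) and \(S^\vee\) is exchanged with \(Q\). You have simply written out the bookkeeping (identifying \(U^h\) with \(U_{h^0}\), \(\pi^h\) with \(\pi_{h^0}\), and \(h \otimes \O \subset S^\vee\) with \(\O \otimes p \subset Q\)) that the paper leaves implicit, and all of it checks out.
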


\begin{lem} \label{lem-pinth}
On \(U_p \cap U^h\), we have \(T_{\pi_p} \cap T_{\pi^h} = h \otimes p \simeq \O_{U_p \cap U^h}\).
\end{lem}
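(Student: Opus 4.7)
The plan is to reduce this to a pointwise linear-algebra identity about intersections of subspaces in a tensor product, using the descriptions of $T_{\pi_p}$ and $T_{\pi^h}$ already provided by Lemmas~\ref{lem:tangent_poining_Sdual} and~\ref{lem:tangent_poining_Q}. Both subbundles sit naturally inside $T_{\Gr(a,V)} \simeq S^\vee \otimes Q$: specifically, $T_{\pi_p} = S^\vee \otimes (\O \otimes p)$ is obtained by tensoring the full $S^\vee$ with the line subbundle $\O \otimes p \subset Q$, while $T_{\pi^h} = (h \otimes \O) \otimes Q$ is obtained by tensoring the line subbundle $h \otimes \O \subset S^\vee$ with the full $Q$. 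In this guise, $h \otimes p$ is the rank-one subbundle $(h \otimes \O) \otimes (\O \otimes p) \subset S^\vee \otimes Q$.

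Next, I would invoke the following elementary linear algebra fact: if $W_1 \subset W_1'$ and $W_2 \subset W_2'$ are inclusions of finite-dimensional vector spaces, then, as subspaces of $W_1' \otimes W_2'$,
\[(W_1' \otimes W_2) \cap (W_1 \otimes W_2') = W_1 \otimes W_2.\]
One inclusion is immediate; for the reverse, choose a basis of $W_1'$ extending a basis of $W_1$, expand a vector in the intersection using that basis, and compare the two expressions. Applied fiberwise at a point $\lambda \in U_p \cap U^h$ with $W_1' = \lambda^\vee$, $W_1 = h_\lambda$ (the line in $\lambda^\vee$ picked out by $h$), $W_2' = V/\lambda$, and $W_2 = \overline{p}$ (the line in $V/\lambda$ spanned by $p$), this identifies the fiber of $T_{\pi_p} \cap T_{\pi^h}$ with the fiber of $h \otimes p$.

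Because this fiberwise identity shows that the sheaf-theoretic intersection $T_{\pi_p} \cap T_{\pi^h}$ has constant rank one and contains the line subbundle $h \otimes p$, the containment $h \otimes p \subset T_{\pi_p} \cap T_{\pi^h}$ is an equality of subbundles. Finally, $h \otimes \O$ and $\O \otimes p$ are each trivial line bundles on their respective open loci, so their tensor product $h \otimes p$ is isomorphic to $\O_{U_p \cap U^h}$.

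There is no real obstacle; the only thing to be slightly careful about is confirming that the intersection of the two subbundles really is a subbundle (as opposed to merely a subsheaf), which follows automatically from the pointwise computation showing the expected rank is attained everywhere on $U_p \cap U^h$.
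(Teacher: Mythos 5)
Your proof is correct and follows essentially the same route as the paper: a fiberwise linear-algebra computation of the intersection inside $S^\vee \otimes Q$, using the identifications from Lemmas~\ref{lem:tangent_poining_Sdual} and~\ref{lem:tangent_poining_Q}. The paper phrases the fiber as $\Hom(\lambda/(\lambda\cap h), (\lambda+p)/\lambda) \simeq \Hom(V/h, p)$ rather than via the tensor-product intersection identity, but this is the same computation; you merely make explicit the elementary fact the paper leaves implicit.
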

\begin{proof}
In terms of the identification \(T_{\Gr(a, V)} \simeq S^\vee \otimes Q\), the fiber of \(T_{\pi_p} \cap T_{\pi^h} \subset T_{\Gr(a, V)}\) at \(\lambda\) is
\[\Hom(\lambda / (\lambda \cap h), (\lambda + p)/\lambda) \simeq \Hom(V/h, p). \qedhere\]
\end{proof}
\subsection{Restricting to general curves}

When \(b\geq 2\), respectively \(a \geq 2\), a general (rational) curve \(C\) of fixed degree in \(\Gr(a, V)\) will be contained in \(U_p\), respectively \(U^h\), for a general choice of \(p\), respectively \(h\). We may therefore restrict the bundles \(T_{\pi_p}\) and \(T_{\pi^h}\) to \(C\).  The following shows that this plays well with the quotient map to the normal bundle (i.e., these bundles do not meet \(T_C \subset T_{\Gr(a, V)}|_C\)).

\begin{lem}\hfill 
\begin{enumerate}
    \item\label{part:p} Let \(C \subset U_p\) be general.  The map \(T_{\pi_p}|_C \to N_C\) is an injection of vector bundles.
    \item Let \(C \subset U^h\) be general.  The map \(T_{\pi^h}|_C \to N_C\) is an injection of vector bundles.
\end{enumerate}
    
\end{lem}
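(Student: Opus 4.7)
The plan is to prove part~(1); part~(2) then follows by applying (1) to the dual Grassmannian $\Gr(b, V^\vee)$, where projection from $p \in \pp V$ corresponds to intersection with the hyperplane $p^0 \subset V^\vee$ and, under the $S^\vee \leftrightarrow Q$ duality of Section~\ref{sec-prelim}, the bundle $T_{\pi^h}$ on $\Gr(a, V)$ is identified with $T_{\pi_p}$ on $\Gr(b, V^\vee)$ by Lemmas~\ref{lem:tangent_poining_Sdual} and~\ref{lem:tangent_poining_Q}.

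The first step is to reduce the statement to an unramifiedness assertion. A morphism of vector bundles on $\pp^1$ is a subbundle injection if and only if it is injective on every fiber. At a point $\lambda \in C$, the kernel of the composition $T_{\pi_p}|_C \hookrightarrow T_{\Gr(a,V)}|_C \twoheadrightarrow N_C$ is $T_{\pi_p, \lambda} \cap T_{C, \lambda}$, and since $T_C$ is a line bundle this intersection is either trivial or all of $T_{C, \lambda}$. By Lemma~\ref{lem:tangent_poining_Sdual}, the second case occurs precisely when the tangent direction $v_\lambda \in \Hom(\lambda, V/\lambda)$ has image contained in the line $(\lambda + p)/\lambda \subset V/\lambda$, which is exactly the condition that $\pi_p|_C \colon C \to \Gr(a, V/p)$ is ramified at $\lambda$. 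So it suffices to show that for a general $(C, p)$ the restricted projection $\pi_p|_C$ is everywhere unramified.

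The second step is a dimension count on the incidence
\[\Sigma \colonequals \{(\lambda, q) \in C \times \pp V : \operatorname{Im}(v_\lambda) \subset (\lambda + q)/\lambda\}.\]
Whenever $v_\lambda$ has rank at least $2$ the inclusion is impossible, so the fiber of $\Sigma$ over $\lambda$ is empty; for a general rational curve with $\min(a, b) \geq 2$ this holds at every point, making the conclusion immediate. At any rank-$1$ point $\lambda$, the locus of bad $q$ is the preimage in $\pp V$ of the $(a + 1)$-plane $\lambda + \widetilde{\operatorname{Im} v_\lambda} \subset V$, which is a linear $\pp^a \subset \pp V$. Hence $\dim \Sigma \leq a + 1$, and projecting to $\pp V = \pp^{a + b - 1}$ shows that the locus of $p$'s for which $\pi_p|_C$ is ramified somewhere is a proper subvariety; for the borderline small-$b$ cases one additionally allows $C$ to vary in its moduli of rational curves of the given degree.

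The main obstacle will be to guarantee that $v_\lambda$ is nowhere zero on $C$: at a point where $v_\lambda = 0$, every $q$ satisfies the inclusion and the dimension count collapses. This will be handled by observing that a general rational curve in $\Gr(a, V)$ is immersed, so $v_\lambda \neq 0$ throughout $C$; for the nodal degenerations relevant to Section~\ref{sec:one_sec_degen}, the argument is applied separately to each branch at the node.
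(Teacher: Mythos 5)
Your two reductions --- part (2) to part (1) by duality, and the injectivity of $T_{\pi_p}|_C \to N_C$ to the unramifiedness of $\pi_p|_C$ --- are exactly the paper's (the second is left implicit there). Where you genuinely diverge is in how unramifiedness is established. The paper's proof is a one-liner: since every degree-$d$ curve in $\Gr(a, V/p)$ arises as a projection (via a splitting $V \cong p \oplus V/p$) and the space of degree-$d$ rational curves in a Grassmannian is irreducible, $\pi_p(C)$ is itself a \emph{general} rational curve of degree $d$ in $\Gr(a, a+b-1)$, hence unramified. Your incidence-variety dimension count is more elementary and self-contained --- it avoids invoking irreducibility of the space of rational curves and works with a fixed $C$ --- and it correctly isolates the two failure modes: $v_\lambda = 0$ (ruled out because a general curve is immersed) and the borderline case $b=2$, where your bound gives $\dim \Sigma \leq a+1 = \dim \pp V$. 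That borderline case is the one soft spot: ``allowing $C$ to vary in moduli'' does not by itself repair the count, since the fiber dimension of the incidence variety over the moduli space is unchanged. What you actually need there is that for general $C$ with $a \geq 2$ the locus of $\lambda$ with $\operatorname{rank} v_\lambda = 1$ is finite (a general element of $\Hom(\lambda, V/\lambda)$ has rank at least $2$ when $a, b \geq 2$), which drops $\dim \Sigma$ to $a < \dim \pp V$. Note that when $\Gr(a, a+b-1) = \pp^1$, i.e.\ $(a,b) = (1,2)$, the statement genuinely fails --- every point of $\pp^2$ lies on a tangent line of $C$ --- so this case is implicitly excluded by both your argument and the paper's; it does not arise in the applications.
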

\begin{proof}
    By duality it suffices to prove part \eqref{part:p}.  This follows because \(\pi_p|_C \colon C \to \Gr(a, V/p)\) is a general rational curve of degree \(d\) in \(\Gr(a, a + b - 1)\).  Hence this map is unramified.
\end{proof}

\begin{defin}\hfill
\begin{enumerate}
    \item The \defi{lower generalized pointing bundle} \(\NCtop\) is the image of \(T_{\pi_p}|_C\) in \(N_C\).  Equivalently, \\ \(\NCtop =  \ker\left(N_C \to N_{\pi_p(C)}\right)\), or further equivalently, this is the normal bundle of \(C\) in 
        \[ \Gamma_p \colonequals \left\{\gamma \in \Gr(a, a+b) : \gamma \subset c + p \text{ for some } c \in C\right\}.\]
    \item     The \defi{upper generalized pointing bundle} \(\NCtoh\) is the image of \(T_{\pi^h}|_C\) in \(N_C\).  Equivalently, \\ \(\NCtoh =  \ker\left(N_C \to N_{\pi^h(C)}\right)\), or further equivalently, this is the normal bundle of \(C\) in 
        \[ \Omega^h \colonequals \left\{\gamma \in \Gr(a, a+b) :  c\cap h \subset \gamma \text{ for some } c \in C\right\}.\]
    \item The intersection \(\NCtop \cap \NCtoh\) is therefore the normal bundle of \(C\) in
    \[\Sigma^h_p \colonequals \left\{\gamma \in \Gr(a, a+b) :  c\cap h \subset \gamma \subset c + p \text{ for some } c \in C\right\}\subset  \Gamma_p \cap \Omega^h.\]
\end{enumerate}
\end{defin}

By construction, these bundles lie in short exact sequences corresponding to projection \(\pi_p\) from \(p\) or intersection \(\pi^h\) with \(h\):
\begin{gather}
    0 \to \NCtop \to N_C \to (\pi_p)^*N_{\pi_p(C)}  \to 0 \label{eq:proj_p} \\
    0 \to \NCtoh \to N_C \to (\pi^h)^*N_{\pi^h(C)}  \to 0. \label{eq:int_h}
\end{gather}
From the global results Lemmas~\ref{lem:tangent_poining_Sdual}, \ref{lem:tangent_poining_Q}, and \ref{lem-pinth} we immediately obtain the following.

\begin{cor}\label{cor:isom_SQ}\hfill
\begin{enumerate}
    \item\label{isomS} Suppose \(C \subset U_p\).  Then \(\NCtop \simeq S|^\vee_C\).
    \item\label{isomQ} Suppose \(C \subset U^h\).  Then \(\NCtoh \simeq Q|_C\).
    \item\label{isomO}
    Suppose \(C \subset U_p \cap U^h\). Then \(\NCtop \cap \NCtoh \simeq \O_C\).
\end{enumerate}
\end{cor}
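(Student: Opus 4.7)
The preceding lemma establishes that \(T_{\pi_p}|_C \to N_C\) and \(T_{\pi^h}|_C \to N_C\) are injections of vector bundles; this immediately identifies \(\NCtop \simeq T_{\pi_p}|_C\) and \(\NCtoh \simeq T_{\pi^h}|_C\).  All three parts then follow from Lemmas~\ref{lem:tangent_poining_Sdual}, \ref{lem:tangent_poining_Q}, and \ref{lem-pinth}.

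For part~\eqref{isomS}, Lemma~\ref{lem:tangent_poining_Sdual} identifies \(T_{\pi_p}|_{U_p}\) with \(S^\vee \otimes (\O \otimes p)\), and since the subbundle \(\O \otimes p \subset Q|_{U_p}\) is abstractly a trivial line bundle (via any chosen nonzero vector in the \(1\)-dimensional space \(p\)), we get \(T_{\pi_p}|_{U_p} \simeq S^\vee|_{U_p}\).  Restriction to \(C\) yields \(\NCtop \simeq S^\vee|_C\).  Part~\eqref{isomQ} is analogous using Lemma~\ref{lem:tangent_poining_Q} and the trivial subbundle \(h \otimes \O \subset S^\vee|_{U^h}\).

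Part~\eqref{isomO} requires more care because the intersection of the images of two injective maps of bundles need not equal the image of the intersection.  Lemma~\ref{lem-pinth} identifies \((T_{\pi_p} \cap T_{\pi^h})|_C \simeq \O_C\), and its composition with either of the injections into \(N_C\) factors through both \(\NCtop\) and \(\NCtoh\), giving an inclusion \(\O_C \hookrightarrow \NCtop \cap \NCtoh\).  To confirm this is an equality, I would appeal to the interpretation \(\NCtop \cap \NCtoh \simeq N_{C/\Sigma^h_p}\) from the definition and compute this line bundle directly.  The map \(\Sigma^h_p \to C\) sending \(\gamma\) to the unique \(c \in C\) with \(c \cap h \subset \gamma \subset c + p\) realizes \(\Sigma^h_p\) as the projectivization of the rank \(2\) vector bundle \(\mathcal{E}\) on \(C\) with fiber \((c+p)/(c \cap h)\), and the section \(C \hookrightarrow \Sigma^h_p\) corresponds to the sub-line-bundle \(L_1 \subset \mathcal{E}\) with fiber \(c/(c \cap h)\).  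The standard formula for the normal bundle of a section of a \(\pp^1\)-bundle gives
\[
N_{C/\Sigma^h_p} \simeq L_1^\vee \otimes (\mathcal{E}/L_1).
\]
Both \(L_1\) and \(\mathcal{E}/L_1\) are naturally trivial via the canonical isomorphisms \(c/(c \cap h) \xrightarrow{\sim} V/h\) (valid on \(U^h\), since both sides are \(1\)-dimensional) and \(p \xrightarrow{\sim} (c+p)/c\) (valid on \(U_p\)).  Hence \(N_{C/\Sigma^h_p} \simeq \Hom(V/h, p) \otimes \O_C \simeq \O_C\), completing part~\eqref{isomO}.

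The main obstacle is part~\eqref{isomO}: we must rule out the possibility that \(\NCtop \cap \NCtoh\) strictly contains \(\O_C\), which is not formal from injectivity of \(T_{\pi_p}|_C, T_{\pi^h}|_C \hookrightarrow N_C\) and Lemma~\ref{lem-pinth} alone.  The direct computation via the projective bundle structure on \(\Sigma^h_p\) bypasses the issue cleanly.
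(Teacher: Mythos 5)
Your parts~\eqref{isomS} and~\eqref{isomQ} coincide with the paper's (unwritten) argument: the paper derives the whole corollary ``immediately'' from Lemmas~\ref{lem:tangent_poining_Sdual}, \ref{lem:tangent_poining_Q}, \ref{lem-pinth} together with the injectivity of \(T_{\pi_p}|_C, T_{\pi^h}|_C \to N_C\), exactly as you do. For part~\eqref{isomO} you take a genuinely different route, computing \(N_{C/\Sigma^h_p}\) as the normal bundle of the section \(L_1 = c/(c\cap h)\) of \(\pp(\mathcal{E})\) with \(\mathcal{E} = (c+p)/(c\cap h)\); the computation \(N_{C/\Sigma^h_p} \simeq \Hom(L_1, \mathcal{E}/L_1) \simeq \Hom(V/h, p)\otimes\O_C\) is correct and is a nice independent confirmation of the fiber description in Lemma~\ref{lem-pinth}. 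However, be aware that this does not actually ``bypass'' the subtlety you raise. The subbundle \(N_{C/\Sigma^h_p} \subset N_C\) is literally the image of \((T_{\pi_p}\cap T_{\pi^h})|_C\) (since \(T_{\Sigma^h_p}|_C = T_C + (T_{\pi_p}\cap T_{\pi^h})|_C\)), so computing it via the \(\pp^1\)-bundle rather than via Lemma~\ref{lem-pinth} produces the same subsheaf and still only gives the inclusion \(\O_C \hookrightarrow \NCtop\cap\NCtoh\). The reverse inclusion is precisely what the ``therefore'' in the paper's Definition of \(\Sigma^h_p\) asserts, and you import it from there; justifying it amounts to checking that \(T_C\) meets \(T_{\pi_p}|_C + T_{\pi^h}|_C\) trivially at every point (a genericity statement about \(C\), \(p\), \(h\), since \(T_{\pi_p}+T_{\pi^h}\) has corank \((a-1)(b-1)\geq 1\)), which neither your write-up nor the paper spells out. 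So your proof is at least as complete as the paper's own, and correctly isolates where the real content lies, but the claimed extra rigor in part~\eqref{isomO} is partly illusory: the projective-bundle computation replaces Lemma~\ref{lem-pinth} rather than supplementing it.
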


\begin{cor}\label{lem:gen_mods_S}
Fix \(x \in C\) and \(p\). For general \(h\), the fiber \((N_{C \to p} \cap N^{C \to h})|_x\) is general in \(N_{C \to p}|_x\).
\end{cor}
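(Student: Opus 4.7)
The plan is to unwind the fiber-level identifications supplied by Lemmas~\ref{lem:tangent_poining_Sdual}, \ref{lem:tangent_poining_Q}, and \ref{lem-pinth} and then observe that the choice of hyperplane $h$ controls a general codimension-one subspace of the $a$-plane $\lambda$ corresponding to $x$.

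More concretely, let $\lambda \subset V$ be the $a$-dimensional subspace corresponding to $x \in C \subset \Gr(a, V)$. The previous lemmas give canonical identifications of fibers
\[
N_{C \to p}|_x \;\simeq\; T_{\pi_p}|_x \;\simeq\; \Hom(\lambda, (\lambda + p)/\lambda) \;\simeq\; \Hom(\lambda, p),
\]
and similarly
\[
(N_{C \to p} \cap N^{C \to h})|_x \;\simeq\; (T_{\pi_p} \cap T_{\pi^h})|_x \;\simeq\; \Hom(\lambda/(\lambda \cap h), p).
\]
Under these identifications, the inclusion of the intersection into $N_{C \to p}|_x$ is precomposition with the surjection $\lambda \twoheadrightarrow \lambda/(\lambda \cap h)$, so its image is exactly the one-dimensional space of functionals in $\Hom(\lambda, p)$ that vanish on $\lambda \cap h$.

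Next I would note that both $x$ (hence $\lambda$) and $p$ are fixed, while $h$ ranges over hyperplanes in $V$ subject only to the open conditions $p \notin h$ (so that $h$ is in $U_p$-compatible position) and $\lambda' \not\subset h$ for every $a$-plane $\lambda'$ parametrized by $C$ (so that $C \subset U^h$). For general $h$ these conditions are satisfied, and the intersection $\lambda \cap h$ is an arbitrary codimension-one subspace of $\lambda$: given any hyperplane $W \subset \lambda$, one can extend $W$ to a hyperplane of $V$ missing $p$ and not containing any of the other $a$-planes on $C$ (since avoiding finitely many closed conditions of positive codimension is generic).

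The hard part, if any, is verifying genericity: one needs that the map from hyperplanes $h$ (with $C \subset U^h$, $p \notin h$) to codimension-one subspaces of $\lambda$ given by $h \mapsto \lambda \cap h$ is dominant. This is immediate because, given any hyperplane $W$ of $\lambda$, the set of hyperplanes $h$ of $V$ with $h \cap \lambda = W$ is an affine space (of dimension $b$), and so it meets the open locus where the required genericity conditions on $C$ and $p$ hold. Since the induced one-dimensional subspace of $\Hom(\lambda, p)$ depends only on the hyperplane $\lambda \cap h \subset \lambda$ and ranges over every such hyperplane, it is general in $\Hom(\lambda, p) \simeq N_{C \to p}|_x$, as claimed.
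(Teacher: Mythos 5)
Your proof is correct and matches the paper's (implicit) argument: the paper states this as an immediate corollary of the fiber descriptions in Lemmas~\ref{lem:tangent_poining_Sdual} and~\ref{lem-pinth}, and your writeup simply unwinds those identifications to see that the line \((N_{C\to p}\cap N^{C\to h})|_x \subset \Hom(\lambda,p)\) is the annihilator of \(\lambda\cap h\), which sweeps out all lines as \(h\) varies. The only cosmetic quibble is that the bad locus of \(h\) (those containing some \(a\)-plane of \(C\)) is a one-parameter family of linear conditions rather than ``finitely many,'' but its union is still a proper closed subset, so the dominance of \(h\mapsto\lambda\cap h\) on the admissible locus goes through as you say.
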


\subsection{Modifications towards generalized pointing bundles}
Recall the notation from Section~\ref{sec:mods} of modifications of vector bundles along divisors.
Of particular interest to us will be the case when \(X = C\) is a curve in \(\Gr(a, a + b)\), and \(E = N_C\) is its normal bundle, and \(F\) is one of \(N_{C \to p}\) or \(N^{C \to h}\). Since this case will occur so often, we introduce notation for modifications towards these subbundles:
\[N_C[x \arrowdown p] \colonequals N_C[x \to N_{C \to p}] \quad \text{and} \quad N_C[x \arrowup h] \colonequals N_C[x \to N^{C \to h}].\]

\section{One-secant degeneration}\label{sec:one_sec_degen}

Let \(a, b \geq 2\), and let \(L\) be a line in \(\Gr = \Gr(a, a + b) = \Gr(a, V)\).
Equivalently, let \(\omega \subset \gamma\) be an \((a - 1)\)-plane contained in an \((a + 1)\)-plane;
the line \(L\) then consists of all \(a\)-planes \(\ell\) with \(\omega \subset \ell \subset \gamma\).
Observe that \(S|_L\) contains a trivial subbundle of rank \(a - 1\) corresponding to \(\omega\),
and that \(Q|_L\) surjects onto a trivial quotient of rank \(b - 1\) corresponding to \(V / \gamma\).
Thus
\[T_{\Gr}|_L \simeq S^\vee \otimes Q \simeq [\O_{\pp^1}(1) \oplus \O_{\pp^1}^{a - 1}  ] \otimes [\O_{\pp^1}(1) \oplus \O_{\pp^1}^{b - 1} ],\]
and so
\[N_{L / \Gr} \simeq \O_{\pp^1}(1)^{(a - 1) + (b - 1)} \oplus \O_{\pp^1}^{(a - 1)(b - 1)}.\]
The positive subbundle of \(N_{L / \Gr}\) can be described naturally in terms of \(\omega\) and \(\gamma\):
Define
\begin{align*}
\Omega(\omega) &= \{\text{\(a\)-planes containing \(\omega\)}\} \\
\Gamma(\gamma) &= \{\text{\(a\)-planes contained in \(\gamma\)}\}.
\end{align*}
Then $\Omega(\omega) \cong \pp^b$, and $\Gamma(\gamma) \cong \pp^a$, and 
\[N_{L / \Omega(\omega)} \simeq \O(1)^{b - 1} \quad \text{and} \quad N_{L / \Gamma(\gamma)} \simeq \O(1)^{a - 1},\]
and the direct sum of these two bundles is the positive subbundle of \(N_{L / \Gr}\).

We now study specializations of  modified normal bundles when 
 we peel off a \(1\)-secant line. Let \(\mathcal{C} \subset \Gr \times \Delta \to \Delta\) be a family of curves
with smooth total space, whose central fiber $\mathcal{C}_0$ is a reducible curve \(C \cup_x L \subset \Gr\), where $C$ and $L$ meet quasi-transversely at a single point $x \in \Gr$.
Write \(\mathcal{N}\) for the normal bundle of \(\mathcal{C}\) in \(\Gr \times \Delta\);
the fiber of \(\mathcal{N}\) over any \(t \in \Delta\) is $N_{\mathcal{C}_t/\Gr}$.  We consider a modification \(\mathcal{N}'\) of \(\mathcal{N}\) along a collection of Cartier divisors $\{D_i\}$ that do not meet \(L\). The general fiber $\mathcal{N}'_t$ for $t \in \Delta^*$ is a modification of $N_{\mathcal{C}_t/\Gr}$. The central fiber \(\mathcal{N}'_0 \) 
is a modification \(N'_{C \cup_x L / \Gr}\) of the normal bundle \(N_{C \cup_x L / \Gr}\).

We write \(N'_C\) for the vector bundle obtained by applying the corresponding modifications to \(N_C\), as we now make precise.
Since the divisors $D_i$ are disjoint from $L$, there exists an open set $U \subset C \cup L$ containing $L$, and an isomorphism \(N'_{C \cup_x L / \Gr}|_U \simeq N_{C \cup_x L / \Gr}|_U\).
To form \(N_C'\), we glue \(N'_{C \cup_x L / \Gr}|_{C \smallsetminus x}\) to \(N_C|_U\) via the isomorphism
\(N'_{C \cup_x L / \Gr}|_{U \smallsetminus x} \simeq N_{C \cup_x L / \Gr}|_{U \smallsetminus x} \simeq N_C|_{U \smallsetminus x}\).

Since $L$ is a $(-1)$-curve in the fiber of $\mathcal{C} \to \Delta$, we can blow $L$ down to obtain a new family $\mathcal{C}^- \to \Delta$ whose central fiber is \(C\). 

\begin{prop} \label{prop:degen}
Let \(p\) and \(h\) be a point and a hyperplane in \(\pp V\) respectively, such that every \(a\)-plane in \(L\) contains \(\omega = x \cap h\) and is contained in \(\gamma = x + p\).
There exists a vector bundle on \(\mathcal{C}^-\) whose general fiber agrees with \(\mathcal{N}'|_{\Delta^*}\) and whose special fiber is
\[N'_C[x \arrowdown p][x \arrowup h].\]
\end{prop}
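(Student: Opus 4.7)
The plan is to realize the desired bundle on $\mathcal{C}^-$ as the descent along $\pi \colon \mathcal{C} \to \mathcal{C}^-$ of a suitable modification of $\mathcal{N}'$ supported on the Cartier divisor $L \subset \mathcal{C}$. Since $L$ is a $(-1)$-curve contracted by $\pi$, a vector bundle on $\mathcal{C}$ descends to $\mathcal{C}^-$ precisely when its restriction to $L$ is trivial; and any modification concentrated on $L$ leaves the generic fiber untouched, so the condition $\mathcal{N}''|_{\Delta^*} = \mathcal{N}'|_{\Delta^*}$ will be automatic. It therefore suffices to construct a modification $\mathcal{N}''$ of $\mathcal{N}'$ along $L$ whose restriction to $L$ is trivial, and then to identify the resulting descended bundle on the central fiber $C \subset \mathcal{C}^-$.

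To trivialize the restriction to $L$, we use the positive subbundles of $\mathcal{N}'|_L$ arising from the decomposition of the ambient tangent bundle along $L$ recalled in the preamble. Let $\mathcal{F}_1 \subset \mathcal{N}'|_L$ be the image of $N_{L/\Omega(\omega)}$, a subbundle isomorphic to $\O(1)^{b-1}$, and let $\mathcal{F}_2 \subset \mathcal{N}'|_L$ be the image of $N_{L/\Gamma(\gamma)} \cong \O(1)^{a-1}$. Since $\mathcal{F}_1 \cap \mathcal{F}_2 = 0$, the multiple modification
$$\mathcal{N}'' \colonequals \mathcal{N}'[L \to \mathcal{F}_1][L \to \mathcal{F}_2]$$
is well-defined via the framework of Section~\ref{sec:mods}. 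Using the short exact sequence for the restriction of a positive modification to its modification divisor, together with $\O_{\mathcal{C}}(L)|_L = \O_L(-1)$ (as $L^2 = -1$), one verifies step by step that each of the two modifications strictly decreases the positive part of the restriction to $L$, with the result $\mathcal{N}''|_L \cong \O_L^{\oplus ab - 1}$. Hence $\mathcal{N}''$ descends uniquely to a vector bundle $\mathcal{E}$ on $\mathcal{C}^-$, and $\mathcal{E}|_{\Delta^*} = \mathcal{N}'|_{\Delta^*}$.

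It remains to identify $\mathcal{E}|_C$. Away from $x$, the divisor $L$ does not meet $C$, so $\mathcal{E}|_{C \setminus x} \cong N'_C|_{C \setminus x}$. At the node $x$, each modification $[L \to \mathcal{F}_i]$ descends to a positive modification of $N'_C$ at $x$ towards a subspace of $N_C|_x$, essentially because $\O_{\mathcal{C}}(L)|_C = \O_C(x)$ (as $L$ and $C$ meet once transversally at $x$), so after restriction to $C$ a single modification along $L$ becomes $N'_C[x \to \mathcal{F}_i|_x]$ under the natural identification of the common fiber at the node. Using Lemmas~\ref{lem:tangent_poining_Sdual}, \ref{lem:tangent_poining_Q}, and \ref{lem-pinth}, one identifies the subspaces coming from $\mathcal{F}_1|_x$ and $\mathcal{F}_2|_x$ with the fibers of $\NCtoh$ and $\NCtop$ at $x$, respectively: geometrically, $\Omega(\omega)$ is a fiber of the projection $\pi^h$ through $x$ (since every $a$-plane in $L$ contains $\omega = x \cap h$), so $T_{\Omega(\omega)}|_x \subset T_{\pi^h}|_x$ maps to $\NCtoh|_x$ in $N_C$; symmetrically $\Gamma(\gamma)$ is a fiber of $\pi_p$ and $T_{\Gamma(\gamma)}|_x$ maps to $\NCtop|_x$. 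The descended modifications are therefore $[x \arrowup h]$ and $[x \arrowdown p]$, which gives the special fiber $N'_C[x \arrowdown p][x \arrowup h]$ as claimed.

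The main obstacle is the last step: one must carefully track the multiple modification at the single point $x$ where the two branches meet, and verify that the fibers $\mathcal{F}_1|_x$ and $\mathcal{F}_2|_x$ inside the common fiber at the node are precisely the fibers of the generalized pointing bundles $\NCtoh$ and $\NCtop$. The flatness hypothesis $\mathcal{F}_1 \cap \mathcal{F}_2 = 0$ from Section~\ref{sec:mods} makes the multiple modification well-defined, while the identification of subspaces at $x$ reduces via the structure results of Section~\ref{sec:generalized_pointing} to the tangent-bundle statements of Lemmas~\ref{lem:tangent_poining_Sdual}, \ref{lem:tangent_poining_Q}, and \ref{lem-pinth}; everything else is formal descent along the $(-1)$-contraction.
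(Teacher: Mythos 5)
Your overall strategy --- modify $\mathcal{N}'$ along the Cartier divisor $L$ so that its restriction to $L$ becomes trivial, descend along the blow-down, and identify the resulting modification of $N'_C$ at the node --- is exactly the paper's. But there is a genuine gap in the key computation: you take the positive subbundle of $\mathcal{N}'|_L$ to be $\mathcal{F}_1 \oplus \mathcal{F}_2 \simeq \O(1)^{b-1}\oplus\O(1)^{a-1}$, i.e.\ the positive part of $N_{L/\Gr}$. However, $\mathcal{N}'|_L$ is not $N_{L/\Gr}$ but rather $N_{C\cup_x L/\Gr}|_L \simeq N_{L/\Gr}[x \to N_{L/\Sigma}]$: the normal bundle of the \emph{nodal} curve restricted to $L$ is a positive modification of $N_{L/\Gr}$ at $x$ in the direction of the other branch, which (since $\Sigma = \Sigma^h_p$ is a smooth surface containing both $C$ and $L$, with $N_{L/\Sigma}\simeq\O_L$) is exactly $N_{L/\Sigma}|_x$. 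Hence the positive subbundle of $\mathcal{N}'|_L$ is $\O(1)^{a+b-1}$, not $\O(1)^{a+b-2}$, and after your two modifications one finds $\mathcal{N}''|_L \simeq \O(1)\oplus\O^{ab-2}$, which is not trivial; the bundle does not descend to $\mathcal{C}^-$. The degrees confirm the discrepancy: your construction adds $(a-1)+(b-1)$ to $\deg N'_C$ at $x$, whereas $N'_C[x\arrowdown p][x\arrowup h]$ adds $\operatorname{rank}\NCtop + \operatorname{rank}\NCtoh = a+b$.

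The missing rank-one piece also spoils your identification at the node: $\mathcal{F}_1|_x = N_{L/\Omega(\omega)}|_x$ has dimension $b-1$, while $\NCtoh|_x \simeq Q|_C|_x$ has dimension $b$, so these fibers cannot agree. The paper's fix is to modify toward $P = (N_{L/\Gamma}+N_{L/\Omega})[x\to N_{L/\Sigma}]$, whose fiber at $x$ glues to that of $(N_{C/\Gamma}+N_{C/\Omega})[x\to N_{C/\Sigma}]$, and then to use the identity of multiple modifications
\[
N'_C[x\to N_{C/\Sigma}][x\to N_{C/\Gamma}+N_{C/\Omega}] \simeq N'_C[x\to N_{C/\Gamma}][x\to N_{C/\Omega}],
\]
valid because $N_{C/\Sigma}=N_{C/\Gamma}\cap N_{C/\Omega}$. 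The essential idea you are missing is the role of the surface $\Sigma$ (the scroll of lines $\Sigma^h_p$): it simultaneously supplies the gluing direction of the two branches at the node and the trivial summand $N_{L/\Sigma}\simeq\O_L$ that becomes positive after the nodal modification. Without it, both the triviality of the restriction to $L$ and the identification of the descended modification fail.
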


\begin{proof}
Recall the notation \(\Omega = \Omega^h\) and \(\Gamma = \Gamma_p\) and \(\Sigma = \Sigma^h_p\) from Section \ref{sec:generalized_pointing}.
Since $\Sigma$ is a scroll swept out by nonintersecting lines and $L$ is one of these lines, we have that \(N_{L/\Sigma} \simeq \O_L\).
Moreover, \(\Sigma\) is transverse to \(\Omega(\omega)\) and \(\Gamma(\gamma)\) by generality,
and is contained in \(\Omega\) and \(\Gamma\) by construction.
These varieties are thus compatible with the following direct sum decomposition of \(N_{L/\Gr}\):

\begin{minipage}{0.95\textwidth}
\[\phantom{N_{L/\Gr} \simeq \underbrace{\O_{\pp^1}(1)^{b-1}}_{N_{L/\Omega(\omega)}} \oplus} \overbrace{\phantom{\underbrace{\O_{\pp^1}}_{N_{L/\Sigma}} \oplus \underbrace{\O_{\pp^1}(1)^{a - 1}}_{N_{L/\Gamma(\gamma)}}}}^{N_{L/\Gamma}} \phantom{\oplus \, \O_{\pp^1}^{ab - a - b}.}\]
\vspace{-50pt}
\[\phantom{N_{L/\Gr} \simeq} \overbrace{\phantom{\underbrace{\O_{\pp^1}(1)^{b-1}}_{N_{L/\Omega(\omega)}} \oplus \underbrace{\O_{\pp^1}}_{N_{L/\Sigma}}}}^{N_{L/\Omega}} \phantom{\oplus \underbrace{\O_{\pp^1}(1)^{a - 1}}_{N_{L/\Gamma(\gamma)}} \oplus \, \O_{\pp^1}^{ab - a - b}.}\]
\vspace{-40pt}
\[N_{L/\Gr} \simeq \underbrace{\O_{\pp^1}(1)^{b-1}}_{N_{L/\Omega(\omega)}} \oplus \underbrace{\O_{\pp^1}}_{N_{L/\Sigma}} \oplus \underbrace{\O_{\pp^1}(1)^{a - 1}}_{N_{L/\Gamma(\gamma)}} \oplus \, \O_{\pp^1}^{ab - a - b}.\]
\end{minipage}

\noindent
Moreover, since \(\Sigma\) is a smooth surface containing \(C \cup L\),
\[ N'_{C \cup L / \Gr}|_L = N_{C \cup L / \Gr}|_L \simeq N_{L/\Gr}[x \to N_{L/\Sigma}] \quad \text{and} \quad N'_{C \cup L / \Gr}|_C \simeq N'_{C/\Gr}[x \to N_{C/\Sigma}].\]
In particular, 
\[ N'_{C \cup L / \Gr}|_L \simeq \O_{\pp^1}(1)^{a+b-1} \oplus \O_{\pp^1}^{ab-a-b},\]
where the positive subbundle $P= \O_{\pp^1}(1)^{a+b-1}$ corresponds to $(N_{L/\Gamma} + N_{L/\Omega})[x \to N_{L/\Sigma}]$.  In particular, over \(x = C \cap L\), the fiber of \(P\) glues to the fiber of \((N_{C/\Gamma} + N_{C/\Omega})[x \to N_{C/\Sigma}]\).

The line \(L\) is a Cartier divisor on \(\mathcal{C}\), so we can construct the modification \(\mathcal{N'}[L \to P]\).
By construction,
\[\mathcal{N}'[L \to P]|_L \simeq \O_{\pp^1}^{ab-1},\]
so $\mathcal{N}' [L \to P]$ is the pullback of a vector bundle $\mathcal{N}^-$ from $\mathcal{C}^-$.
Moreover, since \(L\) is supported on the central fiber $\mathcal{C}_0$,
\[\mathcal{N}^-|_{\Delta^*} \simeq \mathcal{N}'|_{\Delta^*}.\]
To complete the proof, we compute the central fiber of \(\mathcal{N}^-\) as follows:
\begin{align*}
\mathcal{N}^-_0 &\simeq \mathcal{N}'[L \to P]|_C \\
&\simeq N'_{C/\Gr}[x \to N_{C/\Sigma}][x \to N_{C / \Gamma} + N_{C / \Omega}] \\
&\simeq N'_{C/\Gr}[x \to N_{C/\Gamma}][x \to N_{C/\Omega}] \\
&\simeq N'_C[x \arrowdown p][x \arrowup h]. \qedhere\end{align*}
\end{proof}

We next explain how to modify the above analysis to handle the case \(a = 1\).
In this case \(\omega = 0\), and so there is no need to introduce \(h\), \(\Omega(\omega)\), or \(\Omega\);
we also have \(\Gamma(\gamma) = L\) and \(\Gamma = \Sigma\).

\begin{prop} \label{prop:degen-1}
Let \(p\) be a point in \(\pp V\), such that \(L = \overline{xp}\).
There exists a vector bundle on \(\mathcal{C}^-\) whose general fiber agrees with \(\mathcal{N}'|_{\Delta^*}\) and whose special fiber is
\[N'_C(x)[2x \arrowdown p].\]
\end{prop}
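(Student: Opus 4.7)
The plan is to adapt the proof of Proposition~\ref{prop:degen} to the degenerate setting \(a = 1\). Here \(\Gr = \pp^b\), the line \(L = \overline{xp}\), and \(\Sigma = \Gamma\) is the cone over \(C\) with apex \(p\); no \(h\), \(\omega\), or \(\Omega\) are needed. The same argument as in the general case identifies
\[\mathcal{N}|_L \simeq N_{L/\pp^b}[x \to N_{L/\Sigma}] \simeq \O_{\pp^1}(2) \oplus \O_{\pp^1}(1)^{b-2},\]
with the \(\O(2)\) summand being \(N_{L/\Sigma}\), positively modified at the node \(x\) by the node smoothing direction \(T_xC \otimes T_xL\).

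A single positive modification along \(L\) no longer trivializes \(\mathcal{N}|_L\), since one summand has degree \(2\). Instead, I would first twist by \(\O_\mathcal{C}(L)\), giving \(\mathcal{N}'(L)|_L \simeq \O(1) \oplus \O^{b-2}\), and then apply \([L \to F]\) toward the \(\O(1)\) positive subbundle \(F\). Setting \(\mathcal{N}^- \colonequals \mathcal{N}'(L)[L \to F]\), the restriction-of-modification exact sequence from Section~\ref{sec:mods} becomes \(0 \to \O^{b-2} \to \mathcal{N}^-|_L \to \O \to 0\), so \(\mathcal{N}^-|_L \simeq \O^{b-1}\). Since \(L\) is a \((-1)\)-curve, \(\mathcal{N}^-\) descends to a vector bundle on \(\mathcal{C}^-\), and it agrees with \(\mathcal{N}'\) over \(\Delta^*\) because \(L\) lies in the central fiber.

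To identify the central fiber, note that \(\mathcal{N}|_C \simeq N_C[x \arrowdown p]\): the positive modification at \(x\) in the direction \(N_{C \to p}\) recovers the smoothing contribution \(T_xC \otimes T_xL\) fiberwise. Since the divisors defining \(\mathcal{N}'\) are disjoint from \(x\), this gives \(\mathcal{N}'(L)|_C = N'_C(x)[x \arrowdown p]\), and the modification \([L \to F]\) restricts to \([x \to F|_x]\) at \(x = L \cap C\). The main technical step is to verify that \(F|_x\) equals the direction \(N_{C \to p}(2x)|_x\) in \(\mathcal{N}'(L)|_x\); both are canonically \(T_xL \otimes (T_xC)^{\otimes 2}\), arising from the \(N_{L/\Sigma}\)-direction twisted by \(\O_\mathcal{C}(L)|_x = T_xC\) on one side, and from doubly iterating the positive \(N_{C \to p}\) modification on the other. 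A local computation then yields
\[\mathcal{N}^-|_C = N'_C(x)[x \arrowdown p][x \to F|_x] = N'_C(x)[2x \arrowdown p],\]
as desired.
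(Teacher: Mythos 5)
Your proposal is correct and follows essentially the same route as the paper: identify \(\Sigma\) as the cone with apex \(p\) so that \(N_{L/\Sigma} \simeq \O_L(1)\) and \(N'_{C \cup L}|_L \simeq \O_{\pp^1}(1)^{b-2} \oplus \O_{\pp^1}(2)\), form \(\mathcal{N}'(L)[L \to P]\) toward the positive subbundle so the restriction to \(L\) trivializes and the bundle descends to \(\mathcal{C}^-\), and then compute the central fiber using the gluing of the positive direction to \(N_{C/\Sigma}(x)\) at the node. Your identification of \(F|_x\) with \(T_xL \otimes (T_xC)^{\otimes 2}\) is a correct (and slightly more explicit) justification of the gluing statement the paper asserts.
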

\begin{proof}
The key difference from the proof Proposition~\ref{prop:degen}
is that the lines sweeping out \(\Sigma\) are no longer nonintersecting,
since they all pass through the common point \(p\).
Thus we instead have \(N_{L/\Sigma} \simeq \O_L(1)\),
which is compatible with a direct sum decomposition of \(N_{L/\pp V}\) of the form:
\[N_{L/\pp V} \simeq \O_{\pp^1}(1)^{b - 2} \oplus \underbrace{\O_{\pp^1}(1)}_{N_{L/\Sigma}}.\]
A similar analysis as in the proof of Proposition~\ref{prop:degen} therefore yields
\[ N'_{C \cup L / \pp V}|_L \simeq \O_{\pp^1}(1)^{b - 2} \oplus \O_{\pp^1}(2),\]
where the positive subbundle $P= \O_{\pp^1}(2)$ corresponds to $N_{L/\Sigma}(x)$.
In particular, over \(x = C \cap L\), the fiber of \(P\) glues to the fiber of \(N_{C/\Sigma}(x)\).

As in the proof Proposition~\ref{prop:degen},
the modification $\mathcal{N}' (L)[L \to P]$ is the pullback of a vector bundle $\mathcal{N}^-$ from $\mathcal{C}^-$.
To complete the proof, we compute the central fiber of \(\mathcal{N}^-\) as follows:
\begin{align*}
\mathcal{N}^-_0 &\simeq \mathcal{N}'(L)[L \to P]|_C \\
&\simeq N'_{C/\pp V}[x \to N_{C/\Sigma}](x)[x \to N_{C / \Sigma}] \\
&\simeq N'_{C/\pp V}(x)[2x \to N_{C/\Sigma}] \\
&\simeq N'_C(x)[2x \arrowdown p]. \qedhere\end{align*}
\end{proof}

As an application of Proposition~\ref{prop:degen-1},
we prove the following.

\begin{prop}\label{prop:char2}
Let \(C \subset \pp^b\) be a general rational curve of degree \(d\).
Then \(N_C\) is \(2\)-balanced.
\end{prop}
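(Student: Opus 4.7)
My plan is to proceed by strong induction on the degree $d$.

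For the base case $d \leq b$, a general rational curve $C$ of degree $d$ in $\pp^b$ is contained in a linear $\pp^d$, where it is a rational normal curve whose normal bundle is $\O(d+2)^{d-1}$ in any characteristic (when $a = 1$ and the ambient is $\pp^d$, the characteristic $2$ exception of \S\ref{ss:c2} requires $d \equiv 1 \pmod{d-1}$, which is automatic).  The exact sequence
\[0 \to N_{C/\pp^d} \to N_{C/\pp^b} \to N_{\pp^d/\pp^b}|_C \to 0\]
has outer terms $\O(d+2)^{d-1}$ and $\O(d)^{b-d}$, both with summand degrees in the interval $[d, d+2]$.  Lemma~\ref{lem:interval} then implies $N_C$ is $2$-balanced.

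For the inductive step $d > b$, I will specialize $C$ to a reducible curve $C' \cup_x L$, where $C'$ is a general rational curve of degree $d - 1$, $L$ is a general $1$-secant line, and $x \in C' \cap L$ is general.  Proposition~\ref{prop:degen-1} provides a family on $\mathcal{C}^-$ whose general fiber is $N_C$ and whose special fiber is $F \colonequals N_{C'}(x)[2x \arrowdown p]$ for $p \in L \setminus \{x\}$.  Since bundle splitting types specialize in the dominance order, showing $F$ is $2$-balanced will suffice.  Combining the projection exact sequence for $C'$, the modification rule \eqref{eq:ses_mod}, a twist by $\O(x)$, and the identification $N_{C' \to p} \simeq \O(d-1)$ from Corollary~\ref{cor:isom_SQ}\eqref{isomS}, I obtain
\[0 \to \O(d + 2) \to F \to N_{\pi_p(C')}(x) \to 0.\]
By the inductive hypothesis applied to $\pi_p(C') \subset \pp^{b-1}$ of degree $d - 1$, the quotient $G \colonequals N_{\pi_p(C')}(x)$ is $2$-balanced.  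Because $\mu(\O(d+2)) \leq \mu(F)$ (which follows from $d > b$), Lemma~\ref{lem:ses_2bal}\eqref{subsmaller} concludes that $F$ is $2$-balanced, or every summand of $G$ has degree at least $d + 2$.  In the latter case, $2$-balancedness of $G$ places its summands in $[m, m+2]$ for some $m \geq d + 2$; whenever those summands all lie in $[d+2, d+4]$, Lemma~\ref{lem:interval} forces $F$'s summands into the same interval, completing the argument.

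The main obstacle is the residual case in which $G$ has a summand of degree at least $d + 5$.  A numerical check shows this can only occur when $d$ is large relative to $b$ (specifically $d \geq (3b-2)/2$).  In this case a split extension $\O(d+2) \oplus G$ would have spread at least $3$, so the $2$-balancedness of $F$ hinges on the non-triviality of the corresponding class in $\mathrm{Ext}^1(G, \O(d+2))$.  My strategy to finish is to show that the extension class arising from the geometric modification $[2x \arrowdown p]$ is generic in $\mathrm{Ext}^1(G, \O(d+2))$ (leveraging the freedom in choosing $(C', x, p)$), and then to verify directly that a generic extension of a $2$-balanced $G$ by $\O(d+2)$ yields a $2$-balanced $F$.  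Establishing this genericity is the main technical hurdle, and I expect it to be the delicate step of the proof.
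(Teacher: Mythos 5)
Your setup is sound as far as it goes: the base case via the linear span, the exact sequence \(0 \to \O(d+2) \to F \to N_{\pi_p(C')}(x) \to 0\) obtained from \eqref{eq:proj_p}, \eqref{eq:ses_mod} and the twist, and the application of Lemma~\ref{lem:ses_2bal}\eqref{subsmaller} are all correct. But the ``residual case'' you flag at the end is not a loose end --- it is the entire difficulty, and your proposal does not resolve it. The case is genuinely non-vacuous: for example, with \(b = 3\) and \(d = 5\) (characteristic \(2\)), the inductive hypothesis together with the parity constraint of \S\ref{ss:c2} forces \(G = N_{\pi_p(C')}(x) \simeq \O(11)\), so the split model \(E \oplus G = \O(7) \oplus \O(11)\) has spread \(4\), and \(2\)-balancedness of \(F\) would require the extension to be maximally non-split. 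Proving that the geometrically produced extension class is generic in \(\mathrm{Ext}^1(G, \O(d+2))\) is not a routine verification; nothing in your argument (or in the freedom of choosing \((C', x, p)\)) gives you a handle on that class, and without it the proof does not close.

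The paper's actual proof avoids this issue entirely by splitting on the characteristic. In characteristic \(\neq 2\) it simply cites the known balancedness of \(N_{C/\pp^b}\) for nondegenerate curves and handles degenerate ones with the linear-span sequence (your base case, in effect). In characteristic \(2\) it inducts on \(d\) starting from \(d = 1\), applies Proposition~\ref{prop:degen-1} to reach \(N_{C'}(x)[2x \arrowdown p]\) as you do, but then argues on the modification directly rather than through the projection sequence: since \(p\) is general, the first modification \([x \arrowdown p]\) raises the \emph{smallest} summand of the \(2\)-balanced bundle \(N_{C'}(x)\), preserving \(2\)-balancedness, and the second raises \emph{some} summand, giving \(3\)-balancedness. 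The key idea you are missing is the parity constraint from \S\ref{ss:c2}: in characteristic \(2\), every summand of the normal bundle of a degree-\(d\) rational curve is congruent to \(d\) mod \(2\), and a \(3\)-balanced bundle whose summands all share a parity is automatically \(2\)-balanced. That arithmetic observation is what substitutes for the extension-class analysis you would otherwise need.
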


\begin{rem}
In characteristic \(2\),
we have shown in Section~\ref{ss:c2} that every summand of \(N_C\) is congruent to \(d\) mod \(2\).
Since there is a unique \(2\)-balanced bundle of degree \((b + 1)d - 2\) and rank \(b - 1\) with this property,
we can describe \(N_C\) more explicitly as follows.
Write \(d - 1 = (b - 1)q + r\) where \(0 \leq r < b - 1\). Then
\[N_{C / \pp^b} \simeq \O_{\pp^1}(d + 2q + 2)^{\oplus r} \oplus \O_{\pp^1}(d + 2q)^{\oplus (b - 1 - r)}.\]
\end{rem}

\begin{proof}[Proof of Proposition~\ref{prop:char2}]
When the characteristic is not \(2\),
it is known that the normal bundle of a general nondegenerate rational curve \(C \subset \pp^b\) is balanced
\cite{sacchiero80, interpolation}.
If \(C\) is instead degenerate, then by writing \(\pp^d\) for the linear span of \(C\) and considering the exact sequence
\[0 \to \O_{\pp^1}(d + 2)^{d - 1} \simeq N_{C / \pp^d} \to N_{C / \pp^b} \to N_{\pp^d/\pp^b}|_C \simeq \O_{\pp^1}(d)^{b - d} \to 0,\]
we conclude that the normal bundle of any rational curve (even degenerate) is \(2\)-balanced in characteristic not \(2\).

Now assume that the characteristic is \(2\).  We will prove that the normal bundle is \(2\)-balanced by induction on \(d\).
The base case \(d = 1\) holds since the normal bundle of a line in \(\pp^b\) is \(\O_{\pp^1}(1)^{b - 1}\),
which is in particular \(2\)-balanced.

For the inductive step, we degenerate to \(C \cup_x L \subset \pp V\)
and apply Proposition~\ref{prop:degen-1}.
This induces a specialization of the normal bundle of a general rational curve of degree \(d + 1\) in \(\pp^b\) to
\(N_C(x)[2x \arrowdown p]\).

By our inductive hypothesis, \(N_C\), and thus \(N_C(x)\), is \(2\)-balanced.
Since \(p\) is general, the fiber of \(N_{C \to p}|_x\) is general,
so the single modification \([x \arrowdown p]\)
increases the degree of the smallest summand by one.
In particular \(N_C(x)[x \arrowdown p]\) is also \(2\)-balanced.
Making another modification at \(p\) increases the degree of some summand by one,
so \(N_C(x)[2x \arrowdown p]\) must be \(3\)-balanced.

From Section~\ref{ss:c2}, the normal bundle of a general rational curve of degree \(d + 1\) in \(\pp^b\)
has every summand congruent to \(d + 1\) mod \(2\).
Since a \(3\)-balanced bundle with this property must be \(2\)-balanced, completing the inductive step.
\end{proof}

\section{The proof of the main theorem}\label{sec-proof}
In this section, we prove Theorem~\ref{thm-main} by induction. 
Theorem~\ref{thm-main} is the \(n = 0\) case of the following inductive hypothesis.

\begin{thm}\label{thm:stronger}
Let \(C \subset \Gr(a, a + b)\) be a general rational curve of degree \(d \geq 1\) in a Grassmannian \(\Gr(a, a + b)\)
with \(a \geq 1\) and \(b \geq 2\), and let \(n \geq 0\) be a nonnegative integer.
For general points \(x_i \in C\) and \(p_i \in \pp^{a + b - 1}\), let
\[N' \colonequals N_C[x_1 \arrowdown p_1] \cdots [x_n \arrowdown p_n].\]
Then either
\begin{enumerate}
\item\label{part:2bal} The bundle \(N'\) is \(2\)-balanced; or
\item\label{part:bounded} The bundle \(N' \simeq \bigoplus \O(a_i)\) with each \(a_i \leq \left\lceil \frac{d}{a} + n \right\rceil\), and
\(\frac{d}{a} + n > \frac{(a + b)d - 2 + an}{ab - 1}\).
\end{enumerate}
\end{thm}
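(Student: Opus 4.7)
The plan is to proceed by induction on \(d\), with \(n\) allowed to range over \(\zz_{\geq 0}\) at each stage. For the base case \(d = 1\), the curve \(C = L\) is a line, and Section~\ref{sec:one_sec_degen} gives the explicit splitting
\[N_L \simeq \O_{\pp^1}(1)^{a+b-2} \oplus \O_{\pp^1}^{(a-1)(b-1)},\]
together with \(S^\vee|_L \simeq \O_{\pp^1}(1) \oplus \O_{\pp^1}^{a-1}\) and \(Q|_L \simeq \O_{\pp^1}(1) \oplus \O_{\pp^1}^{b-1}\), from which a direct computation verifies that any \(n\) generic \([\arrowdown]\) modifications produce a bundle in case~\ref{part:2bal} or~\ref{part:bounded}. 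The \(a = 1\) case is moreover essentially reduced to Proposition~\ref{prop:char2} (suitably modified at the points \(x_i\)), so I focus on \(a, b \geq 2\) hereafter.

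For the inductive step with \(d \geq 2\), I would degenerate \(C\) to a nodal union \(C' \cup_x L\) with \(\deg C' = d - 1\) and \(L\) a \(1\)-secant line attached at a general point \(x \in C'\). Proposition~\ref{prop:degen} identifies the specialization of the \(n\)-fold modified normal bundle with the bundle
\[N'_{C'}[x \arrowdown p][x \arrowup h]\]
on \(C'\), where \(p\) and \(h\) are determined by \(L\). By upper semicontinuity of splitting types, it suffices to verify the conclusion for this central fiber. Apply the inductive hypothesis to \(C'\) of degree \(d-1\) equipped with the \(n+1\) lower modifications \([x_1 \arrowdown p_1] \cdots [x_n \arrowdown p_n][x \arrowdown p]\): the resulting bundle \(N''\) on \(C'\) is either \(2\)-balanced or falls into case~\ref{part:bounded} with parameters \((d-1, n+1)\). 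By Corollary~\ref{cor:isom_SQ}, the upper pointing bundle \(N^{C' \to h}\) has rank \(b\), so the remaining \([x \arrowup h]\) modification increases the total degree by exactly \(b\); a short calculation confirms that this is precisely the numerical shift needed to pass from \((d-1, n+1)\) to \((d, n)\) in the slope \(\mu = \frac{(a+b)d - 2 + an}{ab - 1}\).

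The heart of the proof is a combinatorial analysis of how the single \([x \arrowup h]\) modification rearranges the splitting of \(N''\). I would use Corollary~\ref{lem:gen_mods_S} to ensure the modifying subspace at \(x\) is suitably general, and the interval lemmas~\ref{lem:interval},~\ref{lem:EG}, and~\ref{lem:ses_2bal} to propagate degree bounds through modifications and associated short exact sequences. The cleanest subcase is when \(N''\) is \(2\)-balanced: a generic positive modification toward a rank-\(b\) subbundle then raises the \(b\) smallest summand degrees by \(1\), preserving \(2\)-balancedness except when the output lands in case~\ref{part:bounded} for \((d, n)\). When \(N''\) instead satisfies case~\ref{part:bounded} for \((d-1, n+1)\), the ceiling bound decreases by at most \(1\) in passing to \((d, n)\), and one must check that the raised summands do not overshoot the new bound—or, when they do, that the improvement is enough to restore case~\ref{part:2bal}. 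The main technical obstacle is precisely this case analysis, coupled with verifying that the slope inequality \(\frac{d}{a} + n > \frac{(a+b)d - 2 + an}{ab - 1}\) transforms compatibly under the parameter shift; the asymmetric form of case~\ref{part:bounded} in \(a\) and \(b\) strongly suggests that certain subcases will need to be treated by invoking the dual statement in \(\Gr(b, a+b)\).
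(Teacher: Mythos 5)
Your induction is structured on the degree \(d\), peeling off one line at a time; the paper's induction is instead on the ambient Grassmannian, passing from \((a,b)\) to \((b-1,a)\) or \((a-1,b)\). This difference is not cosmetic: it is where your argument has a genuine gap. After one application of Proposition~\ref{prop:degen} the central fiber is \(N'_{C'}[x\arrowdown p][x \arrowup h]\), which carries an \emph{upper} modification \([x \arrowup h]\) --- a bundle not of the form covered by the statement of Theorem~\ref{thm:stronger}, which only allows lower modifications \([x_i \arrowdown p_i]\). Your workaround is to apply the inductive hypothesis to the purely-lower-modified bundle \(N''\) and then analyze the effect of the single \([x\arrowup h]\) by hand, claiming that a generic positive modification toward the rank-\(b\) subbundle \(N^{C'\to h}\) raises the \(b\) smallest summand degrees by \(1\). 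But that claim requires the fiber \(N^{C'\to h}|_x\) to be in general position with respect to the splitting of \(N''\), and this is not available: \(h\) is constrained by \(x\cap h = \omega\); the modification \([x\arrowup h]\) takes place at the \emph{same} point \(x\) as the immediately preceding \([x \arrowdown p]\), so one is really modifying toward the extension of \(N^{C'\to h}\) across \(x\) inside \(N_{C'}[x\arrowdown p]\), whose fiber meets \(N_{C'\to p}|_x\) in a line by Lemma~\ref{lem-pinth}; and nothing in the inductive hypothesis controls where the splitting summands of \(N''\) sit relative to this subbundle. Even granting general position, the bookkeeping in your case~\eqref{part:bounded} is not carried out and does not obviously close: the bound \(\left\lceil \frac{d-1}{a}+n+1 \right\rceil\) for \((d-1,n+1)\) can exceed \(\left\lceil \frac{d}{a}+n \right\rceil\), and raising \(b\) summands by \(1\) can overshoot it.

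The paper resolves exactly this difficulty by a different mechanism: it peels off a carefully chosen number of lines (\(\lfloor\delta\rfloor\) and \(\lceil\delta\rceil\), or the analogous \(\epsilon\)), then specializes all the points \(p_i, q_i\) to a single \(p\) (resp.\ all the \(h_j\) to a single \(h\)) and invokes the global exact sequence \eqref{eq:proj_p} (resp.\ \eqref{eq:int_h}). The subbundle in that sequence is a twist/modification of \(S|_C^\vee\) (resp.\ \(Q|_C\)), hence balanced, and the quotient is \(N_{\pi_p(C)}\) decorated with the upper modifications \([y_j\arrowup \pi_p(h_j)]\) --- which, after duality, become \emph{lower} modifications in \(\Gr(b-1,a+b-1)\), i.e.\ precisely an instance of the inductive hypothesis with a new value of \(n\). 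The parameter \(n\) in the statement exists to absorb them; the conclusion then follows from Lemmas~\ref{lem:interval} and~\ref{lem:ses_2bal} together with the numerical inequalities, without ever tracking how an individual modification permutes summands. To salvage a degree induction you would at minimum need to strengthen the inductive statement to allow upper modifications and prove a general-position statement for the fibers of \(N^{C\to h}\) inside the modified bundle; as written, the key step is missing.
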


\begin{rem}
    Observe 
\(\mu(N') = \frac{(a + b)d - 2 + an}{ab - 1}\).
\end{rem}

\begin{rem}\label{rem-n=0}
When $n=0$, we have \(\frac{d}{a}\leq \frac{(a + b)d - 2}{ab - 1}\). Hence, Theorem \ref{thm:stronger} implies that \(N_C\) is \(2\)-balanced and  Theorem \ref{thm-main} holds.
\end{rem}

We will now prove Theorem \ref{thm:stronger} by induction. Our induction will have two base cases:
First, if \((d, n) = (1, 0)\), then \(N'\) is the normal bundle of a line in \(\Gr(a, a + b)\),
hence balanced. We therefore suppose for the remainder of this section that \((d, n) \neq (1, 0)\).
Second, if \(a = 1\), then 
by Proposition~\ref{prop:char2}, the normal bundle of a general rational curve in projective space is \(2\)-balanced.  Since
\(N'\) is obtained by making
\(n\) general positive modifications to
the normal bundle of a rational curve in projective space,
and so \(N'\) is $2$-balanced.
We therefore suppose \(a, b \geq 2\). 
We remark that the induction will proceed from \((a, b)\) to
either \((a - 1, b)\) or \((b - 1, a)\).

We will split the inductive step of the proof of Theorem~\ref{thm:stronger} into three cases, based on the size of \(\frac{d}{a} + n\).

\begin{proof}[Proof of Theorem~\ref{thm:stronger} when \(\frac{d}{a} + n > \frac{(a + b)d - 2 + an}{ab - 1}\)]
We specialize all \(p_i\) to a common point \(p\), which induces a specialization of \(N'\) to
\(N_C[x_1 + \cdots + x_n \arrowdown p]\).
The exact sequence \eqref{eq:proj_p} for projection from \(p\) induces the exact sequence
\[0 \to S|_C^\vee(n) \simeq N_{C \to p}(x_1 + \cdots + x_n) \to N_C[x_1 + \cdots + x_n \arrowdown p] \to N_{\pi_p(C)} \to 0.\]
The subbundle \(S|_C^\vee(n)\) is balanced of slope \(\frac{d}{a} + n\), since \(S|_C^\vee\) is balanced for general \(C\).
After applying duality, the quotient bundle \(N_{\pi_p(C)}\) is another instance of our inductive hypothesis with \((a', b') = (b-1,a)\)  and \(n=0\).  Therefore, by Remark \ref{rem-n=0} and induction, we have that
\(N_{\pi_p(C)}\) is \(2\)-balanced.
By Lemma~\ref{lem:ses_2bal}, either \(N_C[x_1 + \cdots + x_n \arrowdown p]\) is \(2\)-balanced, or all of the summands of \(N_{\pi_p(C)}\) have degree at most \(\left\lceil\frac{d}{a} + n\right\rceil\).  In the first case, Theorem~\ref{thm:stronger}\eqref{part:2bal} holds because \(2\)-balancedness is open.  In the second case, since all the summands of \(S|_C^\vee(n)\) also have degree at most \(\left\lceil\frac{d}{a} + n\right\rceil\), we conclude by Lemma~\ref{lem:interval} that the same is true of \(N_C[x_1 + \cdots + x_n \arrowdown p]\), and hence of \(N'\), since this is an open condition.  Thus Theorem~\ref{thm:stronger}\eqref{part:bounded} holds.
\end{proof}

\noindent
In the next two cases, we will need the following numerical lemma.

\begin{lem} \label{lem:dm1}
Let $a,b,d, n$ be integers with $a,b \geq 2$ and $d \geq 1$ and $n \geq 0$. Assume that  \((d, n) \neq (1, 0)\). Then
\[\frac{(a + b)d - 2 + an}{ab - 1} - \frac{d}{a} - n \leq d - 1 \quad \text{and} \quad \frac{d + n}{b} < \frac{(a + b)d - 2 + an}{ab - 1}.\]
\end{lem}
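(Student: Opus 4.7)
Both inequalities are purely numerical, so the plan is to clear denominators and verify polynomial inequalities. Throughout, \(a, b \geq 2\) ensures \(ab - 1 > 0\), so all denominators are positive and sign of the inequality is preserved.

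\textbf{The second inequality.} The plan is to multiply
\(\frac{d+n}{b} < \frac{(a+b)d - 2 + an}{ab - 1}\)
through by \(b(ab-1)\). After expanding and canceling the \(abd\) and \(abn\) terms on both sides, this should reduce to an inequality of the shape
\[2b < (b^2 + 1)d + n.\]
Since \(b \geq 2\) gives \((b-1)^2 > 0\), hence \(b^2 + 1 > 2b\), and since \(d \geq 1\) and \(n \geq 0\), the right-hand side exceeds \(b^2 + 1 > 2b\). So this inequality requires no case analysis and in particular does not use the hypothesis \((d,n) \neq (1,0)\).

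\textbf{The first inequality.} Here the plan is to multiply
\(\frac{(a+b)d - 2 + an}{ab - 1} - \frac{d}{a} - n \leq d - 1\)
by \(a(ab-1)\) and rearrange all \(d\), \(n\) terms to one side. The reduction should put the inequality in the form
\[a^{2}(b-1)(d+n) \geq (a+1)d + a n + a^{2}b - 3a,\]
or equivalently
\[\bigl[a^{2}(b-1) - (a+1)\bigr] d + \bigl[a^{2}(b-1) - a\bigr] n \;\geq\; a^{2}b - 3a.\]
For \(a, b \geq 2\) both bracketed coefficients on the left are positive, so it suffices to check the two \emph{minimal} cases allowed by the hypothesis \((d,n)\neq(1,0)\): either \((d,n) = (2,0)\), or \(n \geq 1\) and \(d \geq 1\). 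In each case the inequality should reduce to something of the form \(a^{2}(b-2) + (\text{small nonnegative}) \geq 0\), which is immediate.

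\textbf{Main obstacle.} There is nothing conceptually difficult; the only thing to watch is bookkeeping in the first reduction, where several \(abd\) and \(a^2bn\) cross-terms must cancel correctly before the hypothesis \((d,n) \neq (1,0)\) kicks in. This hypothesis is used only for the first inequality (it is needed precisely to rule out the borderline case \((d,n)=(1,0)\), where the left-hand side coefficients multiplied by \(d+n=1\) are too small to dominate \(a^2b - 3a\)); the second inequality holds without it.
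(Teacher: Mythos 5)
Your proposal is correct and follows essentially the same route as the paper: clear the (positive) denominators and verify a linear inequality in \(d\) and \(n\) whose coefficients are positive for \(a, b \geq 2\). The paper packages the first inequality as \([a^2(b-2)+(a+1)(a-2)+1](d+n-2)+n+a^2(b-2)+(a-2)\geq 0\) rather than checking the minimal cases \((d,n)=(2,0)\) and \((1,1)\), but this is the same computation, and your stated reductions — \([a^2(b-1)-(a+1)]d+[a^2(b-1)-a]n\geq a^2b-3a\) for the first inequality and \((b^2+1)d+n>2b\) for the second (which indeed needs no hypothesis on \((d,n)\)) — all check out.
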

\begin{proof}
Since \((d, n) \neq (1, 0)\), we have \(d + n \geq 2\).
Upon rearrangement, the first inequality becomes
\[[a^2(b - 2) + (a + 1)(a - 2) + 1](d + n - 2) + n + a^2(b - 2) + (a - 2) \geq 0,\]
which holds by our assumptions.  Similarly, the second inequality becomes
\begin{equation*}
(b^2 + 1)(d - 1) + (b - 1)^2 + n > 0. \qedhere
\end{equation*}
\end{proof}

\begin{proof}[Proof of Theorem~\ref{thm:stronger} when \(\frac{d + n}{b} \leq \frac{d}{a} + n \leq \frac{(a + b)d - 2 + an}{ab - 1}\)]
Subject to these inequalities, Theorem~\ref{thm:stronger}\eqref{part:bounded} cannot hold, and hence we want to prove that \(N'\) is \(2\)-balanced.
We will do this by constructing two specializations of \(N'\), one providing an upper bound on the degrees of summands of \(N'\), and the other providing a lower bound.
Define
\begin{equation}\label{eqn-defdelta}
    \delta = \frac{(a + b)d - 2 + an}{ab - 1} - \frac{d}{a} - n.
\end{equation}

By assumption and Lemma~\ref{lem:dm1}, we have \(0 \leq \delta \leq d - 1\).
We iteratively specialize \(C\) to the union of a rational curve of degree one less with a \(1\)-secant line,
applying Proposition~\ref{prop:degen} each time.
We will show that:
\begin{enumerate}
\item\label{case1} (Applying Proposition~\ref{prop:degen} \(\lfloor \delta \rfloor\) times) If \(C\) has degree \(d - \lfloor \delta \rfloor\), then
\[N_C[x_1 \arrowdown p_1] \cdots [x_n \arrowdown p_n][y_1 \arrowdown q_1][y_1 \arrowup h_1] \cdots [y_{\lfloor \delta \rfloor} \arrowdown q_{\lfloor \delta \rfloor}][y_{\lfloor \delta \rfloor} \arrowup h_{\lfloor \delta \rfloor}]\]
is either \(2\)-balanced or has all summands at least \(\lfloor \frac{d}{a} + n \rfloor + \lfloor \delta \rfloor\); and

\item\label{case2} (Applying Proposition~\ref{prop:degen} \(\lceil \delta \rceil\) times) If \(C\) has degree \(d - \lceil \delta \rceil\), then
\[N_C[x_1 \arrowdown p_1] \cdots [x_n \arrowdown p_n][y_1 \arrowdown q_1][y_1 \arrowup h_1] \cdots [y_{\lceil \delta \rceil} \arrowdown q_{\lceil \delta \rceil}][y_{\lceil \delta \rceil} \arrowup h_{\lceil \delta \rceil}]\]
is either \(2\)-balanced or has all summands at most \(\lceil \frac{d}{a} + n \rceil + \lceil \delta \rceil\).
\end{enumerate}
This suffices to complete the proof, since either one of these specializations implies that \(N'\) is \(2\)-balanced, or together, they imply that all of the summands of \(N'\) lie in the interval 
\[\left[\left\lfloor \frac{d}{a} + n \right\rfloor + \left\lfloor \delta \right\rfloor, \left\lceil \frac{d}{a} + n \right\rceil + \left\lceil \delta \right\rceil \right],\] 
which has length at most \(2\).

We prove both of these the same way: Specialize all \(p_i\) and \(q_i\) together to a common point \(p\) and specialize all \(h_j\) to contain \(p\), which induces a specialization of \(N'\) to \(N'_1\),
and use the exact sequence \eqref{eq:proj_p} of projection from \(p\):
\[0 \to S' \to N'_1 \to N_{\pi_p(C)}[y_1 \arrowup \pi_p(h_1)] \cdots [y_{[\delta]} \arrowup \pi_p(h_{[\delta]})] \to 0,\]
where
\[S' \colonequals S|^\vee_C(n + [\delta])[y_1 \arrowup \pi_p(h_1)] \cdots [y_{[\delta]} \arrowup \pi_p(h_{[\delta]})],\]
and where \([\delta]\) is either the floor or ceiling depending on if we are in Case~\eqref{case1} or \eqref{case2}.
Applying duality, the quotient is another instance of our inductive hypothesis with \((a', b') = (b-1, a)\).  In Case~\eqref{case1}, we will verify the inequality
\begin{equation}\label{ineq:floor}
    \frac{d - \lfloor \delta \rfloor}{b - 1} + \lfloor \delta \rfloor \leq \frac{(a + b - 1)(d - \lfloor \delta \rfloor) - 2 + (b - 1)\lfloor \delta \rfloor}{a(b - 1) - 1},
\end{equation} 
which guarantees that the quotient is \(2\)-balanced
when we apply our inductive hypothesis.  The inequality assumed in this case implies that \(\mu(S') \leq \mu(N'_1)\). 
Since \(S'\) is balanced by Corollary~\ref{lem:gen_mods_S}, Lemma~\ref{lem:ses_2bal} implies that either \(N'_1\) (and hence \(N'\)) is \(2\)-balanced, or all summands of \(N'_1\) (and hence of \(N'\)) have degree at least \(\left\lfloor\mu(S') \right\rfloor = \lfloor \frac{d}{a} + n \rfloor + \lfloor \delta \rfloor\).

In Case~\eqref{case2}, we verify the inequality
\begin{equation}\label{ineq:ceil}
    \left\lceil \frac{d - \lceil \delta \rceil}{b - 1} + \lceil \delta \rceil \right\rceil \leq \left\lceil \frac{d}{a} + n \right\rceil + \lceil \delta \rceil,
\end{equation} 
which guarantees that 
either the quotient \(N_{\pi_p(C)}[y_1 \arrowup \pi_p(h_1)] \cdots [y_{\lceil\delta\rceil} \arrowup \pi_p(h_{\lceil\delta\rceil})]\) is \(2\)-balanced, or all of its summands have degree at most \(\left\lceil \frac{d}{a} + n \right\rceil + \lceil \delta \rceil\).  In the first case, we conclude as above, combining the fact that  \(S'\) is balanced and \(\mu(S') \geq \mu(N'_1)\) in Lemma~\ref{lem:ses_2bal}.  In the second case, since the summands of \(S'\) are also bounded by \(\left\lceil \frac{d}{a} + n \right\rceil + \lceil \delta \rceil\), we conclude that the same is true of \(N'_1\) by Lemma~\ref{lem:interval}.

We now verify the inequalities \eqref{ineq:floor} and \eqref{ineq:ceil}.
Upon rearranging, these two inequalities  follow from:
\begin{equation} \label{eq:delta1}
\frac{d +  (b-2)\lfloor \delta \rfloor}{b - 1}  \leq \frac{(a + b - 1)d - a \lfloor \delta \rfloor - 2 }{a(b - 1) - 1} \quad \text{and} \quad \frac{d - \lceil \delta \rceil}{b - 1} \leq \frac{d}{a} + n.
\end{equation}
In fact, we will show that the following stronger inequalities hold:
\begin{equation} \label{eq:delta1-nfc}
\frac{d +  (b-2) \delta }{b - 1}  \leq \frac{(a + b - 1)d - a  \delta  - 2 }{a(b - 1) - 1}
 \quad \text{and} \quad \frac{d - \delta}{b - 1} \leq \frac{d}{a} + n,
\end{equation}
except in the following cases:
\begin{enumerate}
\item \label{e1} If \(n = 0\) and \(b = a\) and \(d \leq a - 1\);
\item \label{e2} If \(n = 1\) and \(b \leq (a + 1)/3\) and \(d = a(b - 1)/(a - b)\).
\end{enumerate}
Observe that the two inequalities \eqref{eq:delta1-nfc} are equivalent:
In fact, upon rearrangement and substituting the definition of \(\delta\) from \eqref{eqn-defdelta}, both can be written as:
\begin{equation}\label{eq:single_ineq}
    [(b - a)(ab - a - 1) + 2]d + (a^2 (b-1)^2 - ab + 2a)n \geq 2a.
\end{equation}
To verify that \eqref{eq:single_ineq} holds except in cases \eqref{e1} and \eqref{e2}, we divide into cases based on the relative sizes of \(a\) and \(b\).

If \(b > a \geq 2\), since $(b-1)^2 >b$,  the coefficients of both \(d\) and \(n\) are 
at least \(2a\). Since \(d \geq 1\) and \(n \geq 0\), the inequality follows.

If \(b = a \geq 2\), then the coefficient of \(n\) is at least \(2a\),
and the coefficient of \(d\) is \(2\).
Thus the inequality holds unless \(n = 0\) and \(d < a\), which is case~\eqref{e1} above.

If \(b < a\), then the left-hand side of \eqref{eq:single_ineq} is decreasing in \(d\).
From our assumption that \(\frac{d + n}{b} \leq \frac{d}{a} + n\), we conclude \(n \geq 1\) and
\(d \leq (abn - an) / (a - b)\).
If this bound on \(d\) is strict, then \(d \leq (abn - an - 1) / (a - b)\).
Substituting this value into \eqref{eq:single_ineq} and rearranging, it suffices to check
\[(a^2b + ab^2 - 2ab)(n - 1) + (a^2b^2 - ab^3 - a^2b + 3ab^2 - 3ab + b^2 - a - b) \geq 0.\]
Observing that 
$$a^2b^2 - ab^3 - a^2b + 3ab^2 - 3ab + b^2 - a - b = (a-b-1)^2(b^2-b) + (a-b-1)(b^3 +3b^2-5b-1) + (3b^3-6b-1),$$
we see that the inequality holds for \(n \geq 1\) and \(a > b \geq 2\).
We are thus reduced to the case \(d = (abn - an) / (a - b)\).
Substituting this value in \eqref{eq:single_ineq}, our inequality reduces to
\[(a + b - 2)n \geq 2(a - b),\] which holds for \(n \geq 2\).
Our inequality therefore holds unless \(n = 1\) and
\(a + b - 2 < 2(a - b)\), or upon rearrangement, \(b < (a + 2)/3\).
We thus fall into case~\eqref{e2} above.

To complete the proof, we must establish the inequalities \eqref{eq:delta1} under the hypotheses
\eqref{e1} or \eqref{e2}.
Note that both of these cases imply equality in our assumption: \(\frac{d + n}{b} = \frac{d}{a} + n\). In particular \(\delta > 0\)
by Lemma~\ref{lem:dm1}.
We first claim that both of these sets of hypotheses imply \(\delta < 1\).
Indeed, for \eqref{e1}, we substitute \(n = 0\) and \(b = a\) into \(\delta\)
to obtain \(\delta = ((a^2 + 1)d - 2a)/(a^3 - a)\). This is increasing in \(d\),
thus maximized when \(d = a - 1\), for which it is \((a^3 - a^2 - a - 1)/(a^3 - a) < 1\).
For \eqref{e2}, we substitute \(n = 1\) and \(d = a(b - 1)/(a - b)\) into \(\delta\)
to obtain \(\delta = (ab^2 - ab - a + 2b - 1)/(a^2b - ab^2 - a + b)\).
It thus suffices to show \((ab^2 - ab - a + 2b - 1)/(a^2b - ab^2 - a + b) < 1\),
or upon rearrangement, \((a^2 + a - 1 - 2ab)b > -1\), which follows from the nonnegativity of \(a^2 + a - 1 - 2ab\), which we now show.
This quantity is decreasing in \(b\), thus minimized when \(b = (a + 1)/3\),
for which it is \((a^2 + a - 3)/3 \geq 0\).

The upshot is that, in both of these cases, \(\lfloor \delta \rfloor = 0\) and \(\lceil \delta \rceil = 1\).
Substituting these values into our original inequalities \eqref{eq:delta1} and simplifying, it remains to check
\[(b^2 - 2b + 2)(d - 1) + (b - 2)^2 \geq 0 \quad \text{and} \quad (b - a - 1)d + (ab - a)n + a \geq 0.\]
The first of these inequalities always holds, and the second evidently holds in case \eqref{e1},
so it remains to check the second in case \eqref{e2}.
Substituting in \(n = 1\), and \(d = a(b - 1)/(a - b)\), it rearranges to
\(b \leq (a + 1)/2\), which is  implied by \(b \leq (a + 1)/3\).
\end{proof}

\begin{proof}[Proof of Theorem~\ref{thm:stronger} when \(\frac{d}{a} + n < \frac{d + n}{b} < \frac{(a + b)d - 2 + an}{ab - 1}\)]
Define
\[\epsilon = \frac{(a + b)d - 2 + an}{ab - 1} - \frac{d + n}{b}.\]

Our argument proceeds as in the previous case, peeling off both \(\lfloor\epsilon\rfloor\) and \(\lceil\epsilon \rceil\) lines and applying Proposition~\ref{prop:degen}.
Instead of specializing the \(p_i\) and \(q_i\) together and projecting from \(p\), we 
specialize all of the \(h_j\) together to a common hyperplane \(h\), specialize all of the \(p_i\) to lie in \(h\), 
and use the exact sequence \eqref{eq:int_h} of intersection with \(h\).
This reduces us directly to an instance of our inductive hypothesis, without applying duality. Analogously to the previous case, it suffices to check the inequalities:
\[\frac{d - \lfloor \epsilon \rfloor}{a - 1} + n + \lfloor \epsilon \rfloor \leq \frac{(a + b - 1)(d - \lfloor \epsilon \rfloor) - 2 + (a - 1)(n + \lfloor \epsilon \rfloor)}{(a - 1)b - 1} \quad \text{and} \quad \left\lceil \frac{d - \lceil \epsilon \rceil}{a - 1} + n + \lceil \epsilon \rceil \right\rceil \leq \left\lceil \frac{d + n}{b}\right\rceil + \lceil \epsilon \rceil.\]
Upon rearrangement, these inequalities follow from:
\begin{equation} \label{eq:epsilon2}
\frac{d + (a - 2)\lfloor \epsilon \rfloor}{a - 1} + n \leq \frac{(a + b - 1)d + (a - 1)n - b \lfloor \epsilon \rfloor - 2}{(a - 1)b - 1} \quad \text{and} \quad \frac{d - \lceil \epsilon \rceil}{a - 1} + n \leq \frac{d + n}{b}.
\end{equation}
In fact, we claim that the following stronger inequalities hold, this time with no exceptions:
\begin{equation} \label{eq:epsilon2-nfc}
\frac{d + (a - 2) \epsilon}{a - 1} + n \leq \frac{(a + b - 1)d + (a - 1)n - b \epsilon - 2}{(a - 1)b - 1} \quad \text{and} \quad \frac{d - \epsilon}{a - 1} + n \leq \frac{d + n}{b}.
\end{equation}
To see this claim, we first observe that the two inequalities \eqref{eq:epsilon2-nfc} are equivalent. In fact, upon rearrangement,
both can be written as:
\begin{equation}\label{ineq:dpos}
[(a - b)(ab - a - 1) + 2]d - (a^2b^2 - a^2b - ab^2 + a + b - 2)n \geq 2b.
\end{equation}
This time our assumption \(\frac{d}{a} + n < \frac{d + n}{b}\) forces \(a > b\),
so the coefficient of \(d\) in \eqref{ineq:dpos} is positive.

If \(n = 0\), then substituting \(n = 0\) and \(d \geq 1\),
it suffices to check \((a - b)(ab - a - 1) + 2 \geq 2b\),
which holds for \(a > b \geq 2\).

Otherwise, our assumption \(\frac{d}{a} + n < \frac{d + n}{b}\) rearranges to
\(d > (abn - an)/(a - b)\), or equivalently, \(d \geq (abn - an + 1)/(a - b)\).
Substituting this into our inequality and rearranging, it suffices to check
\[(ab + b^2 - 2b)(n - 1) + a^2b - ab^2 - 2ab + 4b^2 - a - b + 2 \geq 0,\]
which holds for \(n \geq 1\) and \(a > b \geq 2\) because
\[a^2b - ab^2 - 2ab + 4b^2 - a - b + 2 = (a - b - 1)^2 b + (a - b - 1)(b^2 - 1) + (3b^2 - 3b + 1). \qedhere\]
\end{proof}

\bibliographystyle{plain}

\end{document}